\newtheorem{theorem}{Theorem}[section]
\theoremstyle{definition}
\newtheorem{definition}[theorem]{Definition}
\newtheorem{example}[theorem]{Example}
\newtheorem{proposition}[theorem]{Proposition}
\newtheorem{corollary}[theorem]{Corollary}
\newtheorem{observation}[theorem]{Observation}
\theoremstyle{remark}
\numberwithin{equation}{section}
\begin{document}
\title{Immersed flat ribbon knots}
\date{\today}
\author{Jos\'{e} Ayala}

\address{Facultad de Ciencias, Universidad Arturo Prat, Iquique, Chile} 
\address{School of Mathematics and Statistics, University of Melbourne
              Parkville, VIC 3010 Australia}
              \email{jayalhoff@gmail.com}
\author{David Kirszenblat}
\address{School of Mathematics and Statistics, University of Melbourne
              Parkville, VIC 3010 Australia}
\email{d.kirszenblat@student.unimelb.edu.au}
\author{Joachim Hyam Rubinstein}
\address{School of Mathematics and Statistics, University of Melbourne
              Parkville, VIC 3010 Australia}
\email{joachim@unimelb.edu.au}
\subjclass[2000]{57M25, 57M27, 49Q10, 53C42}
\keywords{Knots, links, ribbons, ribbonlength, ropelength}
\maketitle
\baselineskip=20 true pt
\maketitle \baselineskip=1.1\normalbaselineskip


\begin{abstract} 

We study the minimum ribbonlength for immersed planar ribbon knots and links. Our approach is to embed the space of such knots and links into a larger more tractable space of disk diagrams. When length minimisers in disk diagram space are ribbon, then these solve the ribbonlength problem. We also provide examples when minimisers in the space of disk diagrams are not ribbon and state some conjectures. We compute the minimal ribbonlength of some small knot and link diagrams and certain infinite families of link diagrams. Finally we present a bound for the number of crossings for a diagram yielding the minimum ribbonlength of a knot or link amongst all diagrams. 
\end{abstract}

\section{Introduction}

A flat ribbon knot is an immersion of an annulus into the Euclidean plane such that the core of the annulus corresponds to an immersed knot diagram. Flat ribbon links then are a finite union of flat ribbon knots. We require that the combinatorics of the self-intersections of the knot or link match that of the associated immersed annulus or annuli. The ribbonlength problem aims to find the minimal ratio between the length of the core to the ribbon width over all the planar realisations in a knot or link type. We normalise our ribbons so that the ribbon width is $2$ as can be done by a suitable homothety of the plane. 

Kauffman \cite{kauffman} proposed a model for folded flat ribbon knots whose core is piecewise linear. He gave conjectures about the ribbonlength of the trefoil and the figure-eight knot. Currently, results in the literature provide bounds on the ribbonlength of flat folded ribbons knots for certain families of torus knots. The ribbonlength problem remains open for general immersed and folded ribbon knots. A good survey on flat folded ribbon knots can be found in \cite{denne1}. Denne, Sullivan and Wrinkle have reported to be working on the same problem considered here \cite{denne3}. 
 
 A similar scenario occurs for the ropelength problem, the 3-dimensional analog of the ribbonlength problem \cite{sullivan1, diao, gonzalez, litherland}. Minimum ropelength is the smallest ratio between the thickness of a tubular neighbourhood of a knot or link and its length. In this case, the unknot is minimised by a round circle and certain chain links are minimised by linked piecewise $C^2$ loops \cite{sullivan1}.
 

\begin{figure} [h!]
\centering
\includegraphics[width=1\textwidth]{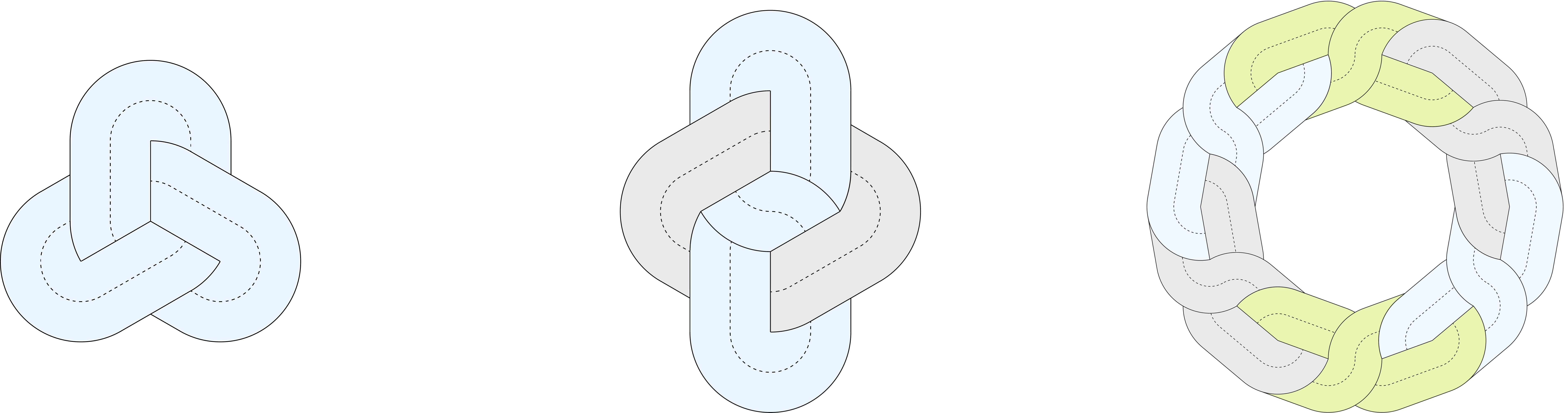}
\caption{Examples of minimal ribbonlength knot and link diagrams. From left to right: the trefoil, Hopf link and an element of an infinite family of link diagrams whose ribbonlength can be computed.}
\label{fig:examples}
\end{figure}
 
In this note, we describe a method that in some cases leads to the solution of the ribbonlength problem for flat immersed ribbon knot and link diagrams, see Fig. \ref{fig:examples}. Our method finds the tightest immersed flat ribbon knot or link diagram satisfying some simple geometric constraints. These constraints allow us to embed the space of immersed flat ribbon knots and link diagrams into a larger space of disk diagrams. The latter space has better properties and in particular, passing to a minimum length is straightforward. Moreover, we can characterise critical points of length in this larger space as piecewise $C^2$ immersions, formed by concatenations of arcs of (unit radius) circles and straight line segments.  

When a minimum length disk diagram satisfies the conditions for being a ribbon knot or link, this immediately gives us the solution for the ribbonlength problem for such diagrams. We give examples where this is the case and also where the minimum length disk diagrams are not ribbon. We compute the minimal ribbonlength of some small knot and link diagrams like the unknot, trefoil, Hopf link and an infinite family of link diagrams - see Figure \ref{fig:examples}.

Finally using disk diagrams, we are able to give a bound on the number of crossings in a diagram of a given knot or link which minimises ribbonlength amongst all diagrams.

\section{Immersed ribbon loops}

An immersion is a $C^1$ map between manifolds whose Jacobian is injective everywhere. We consider $C^1$ immersed loops $\gamma: S^1\to\mathbb R^2$, where the length of the derivative $||\gamma^\prime||=1/\ell$ and $S^1=[0,1]/\{0\sim 1\}$, denoting the length of the loop by $\ell$. We endow the space of immersed loops with the $C^1$ metric. A map $\gamma: X\to Y$ is considered either as a function or its image $\gamma (X) \subset Y$ when no confusion occurs. 

The medial axis of an embedded planar loop is the locus of the centres of the circles of maximal radii which are tangent to the loop and are inscribed in the compact region bounded by the loop \cite{younes}. Note that the medial axis is contained in the focal cut locus, i.e the points in the region bounded by the loop where either the distance function to the loop has zero Jacobian or is not smooth. This locus consists of either focal conjugate points for the normal map to the loop (where the normal map has a singular Jacobian) or focal cut points where there are several equal length minimising normals to the loop. A simple example is an ellipse where the medial axis or focal cut locus is a closed interval. The end points of this interval are focal conjugate points and the interior points are focal cut points. 

In this paper, we will use an analogous concept of medial axis for an embedded planar open or closed arc. Note that the key difference to the case of an embedded loop is that such an arc does not separate the plane.  So we do not restrict attention to one side of the arc. In particular, the medial axis of an arc is not necessarily connected and there can be components associated to normals on either side of the arc. So the medial axis for an arc is the locus of centres of maximal circles touching the arc with interiors disjoint from the arc and is contained in the focal cut locus of the arc. The medial axis of a closed arc is necessarily closed. Note we will combine the medial axes of a finite closed arc cover of an immersed loop, obtaining a closed set as a result. 

\begin{definition} Given a $C^1$ immersion $\gamma:S^1 \to \mathbb R^2$, choose $I\subset S^1$ a sufficiently small open or closed interval such that $\gamma(I)$ is embedded. The {\bf separation bound} for $\gamma$ on $I$ is the condition that for an arbitrary point $c$ on the medial axis of $\gamma$ restricted to $I$, $||\gamma(s)-c||\geq 1$, for all $s\in I$. 
\end{definition}

\begin{definition} Given a $C^1$ immersion $\gamma:S^1 \to \mathbb R^2$,  {\bf the weak separation bound} is the condition that for each point $p = \gamma(s)$, there are two circles of radius $1$ which touch $\gamma$ at $p$ one on each side of $\gamma$. In other words, the circles pass through $p$, have a common tangent to $\gamma$ at $p$ and locally lie on one side and the other side of $\gamma$ near $p$.

\end{definition}

Let $T(s)$ and $N(s)$, for $s\in S^1$, be a continuous choice of unit length tangent and normal vectors to $\gamma$ respectively at $\gamma(s)$.  Let $L_T(s)=\langle \gamma(s)+tT(s) \rangle$ and $L_N(s)=\langle \gamma(s)+tN(s) \rangle$ be the tangent and normal lines passing through $\gamma(s)$. We define a continuous map $\Gamma :S^1 \times (-1,1) \to \mathbb R^2$, which is an extension of $\gamma$, by $\Gamma(s,t)=\gamma(s)+tN(s)$. So $\Gamma(s,0)=\gamma(s)$, for $s\in S^1$. The notation $\Gamma$ will be considered either an immersion of an open annulus or its image when no confusion occurs, assuming that $\gamma$ satisfies the separation bound for a collection of small intervals covering $\gamma$.

\begin{proposition}\label{localribbon}  Let $\gamma: S^1 \to\mathbb R^2$ be an immersed loop and $I \subset S^1$ be a closed interval so that $\gamma(I)$ is embedded and $\gamma$ satisfies the separation bound for $\gamma$ on $I$. If $I$ is sufficiently small then $\Gamma(I\times(-1,1))$ is embedded.
\end{proposition}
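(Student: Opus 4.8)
The plan is to read $\Gamma(s,t)=\gamma(s)+tN(s)$ as the normal exponential map of the embedded arc $\gamma(I)$ and to recognise the separation bound as saying exactly that the \emph{reach} of $\gamma(I)$ (in the sense of Federer) is at least $1$; the statement then becomes an instance of the tubular neighbourhood theorem for sets of positive reach. Concretely, it suffices to prove that $\Gamma$ is injective on $I\times(-1,1)$, because $I$ is compact, so injectivity together with continuity on each slab $I\times[-1+\varepsilon,1-\varepsilon]$ promotes $\Gamma$ to a homeomorphism onto its image (and, when $\gamma$ is $C^2$, to a diffeomorphism onto a smooth submanifold: the Jacobian of $\Gamma$ is then a positive multiple of $1-t\kappa$, which is nonzero for $|t|<1$ once $|\kappa|\le 1$).

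\emph{Step 1: the separation bound forces $\operatorname{reach}(\gamma(I))\ge 1$.} The reach of $\gamma(I)$ is the distance from $\gamma(I)$ to the closure of the set of points with non-unique nearest point; together with the focal conjugate points (e.g.\ centres of osculating circles) this set is carried by the medial axis of $\gamma|_I$ in the authors' sense, namely centres of maximal inscribed disks. This is where smallness of $I$ is used: for a short embedded arc there is no ``bottleneck'' (the arc does not return near itself) and each osculating disk has interior disjoint from the arc, so every obstruction that could push the reach below $1$ — a coincidence of two nearest points, or a centre of curvature — is realised as a point of the medial axis at distance $<1$ from $\gamma(I)$, contradicting the separation bound.

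\emph{Step 2: $\operatorname{reach}(\gamma(I))\ge 1$ gives injectivity.} Suppose $\Gamma(s_1,t_1)=\Gamma(s_2,t_2)=:c$ with the pairs distinct. If $s_1=s_2$ then $t_1N(s_1)=t_2N(s_1)$ forces $t_1=t_2$, so assume $s_1\neq s_2$. Since $|t_i|<1\le\operatorname{reach}(\gamma(I))$, the point $c$ lies within the reach and hence has a \emph{unique} nearest point on $\gamma(I)$. I would then argue that, marching out along the normal at any point $\gamma(s)$, the foot point stays equal to $\gamma(s)$ until distance $\operatorname{reach}(\gamma(I))\ge 1$: the first place it could change is a point with a second nearest point or a focal conjugate point, and by Step 1 neither occurs inside the open unit neighbourhood. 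Hence $\gamma(s_i)$ is the nearest point of $c=\gamma(s_i)+t_iN(s_i)$ for $i=1,2$; by uniqueness $\gamma(s_1)=\gamma(s_2)$, and since $\gamma|_I$ is embedded, $s_1=s_2$, a contradiction. Thus $\Gamma$ is injective on $I\times(-1,1)$ and the proposition follows as in the first paragraph.

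\emph{The main obstacle} is Step 1: translating the authors' disk-based medial axis into the reach bound $\operatorname{reach}(\gamma(I))\ge 1$. The delicate point, in the $C^1$ category where there is no curvature, is the focal conjugate (osculating-circle) part of the medial axis: one must verify that a point whose unique nearest point ``wants to split'' — i.e.\ is a limit of nearest points of nearby points — is genuinely the centre of a maximal inscribed disk, hence detected by the separation bound. It is precisely the conjunction of the two hypotheses that secures this: smallness of $I$ keeps $\gamma(I)$ a short embedded arc with no self-approach, and the separation bound then simultaneously forbids self-approach within distance $2$ and (generalised) curvature exceeding $1$; once these are in hand, the remainder is the standard normal-tube argument above.
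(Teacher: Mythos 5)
Your route is genuinely different from the paper's, and it is viable provided the citation you name is allowed to do real work. The paper argues directly: it takes a putative double point of $\Gamma$ with $\min\{t,t'\}$ minimal and perturbs it along the bisector of the two normal lines, which is where the smallness of $I$ and the tangent unit circles of the weak separation bound enter. You instead convert the separation bound into $\mathrm{reach}(\gamma(I))\ge 1$ and invoke the tubular neighbourhood theorem for sets of positive reach (Federer, \emph{Curvature measures}, Theorem 4.8). Your Step 1 is correct, and in fact simpler than you make it: since reach is defined purely through uniqueness of nearest points, the focal/osculating-circle worry you flag as the ``main obstacle'' is a red herring. All you need is that a point $c$ at distance $d<1$ from the arc with two nearest points $p_1\ne p_2$ is the centre of a maximal inscribed disk: any disk with interior disjoint from the arc and containing $B(c,d)$ must keep $p_1,p_2$ on its boundary, and two distinct common boundary points of nested disks force the disks to coincide. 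Hence $c$ is a medial axis point at distance $d<1$ from its touch points, contradicting the separation bound. Note this uses neither the smallness of $I$ nor the weak separation bound, so your argument actually yields the conclusion for any closed $I$ on which $\gamma(I)$ is embedded and the separation bound holds; what each approach buys is clear: the paper's proof is self-contained and elementary (at the cost of the smallness hypothesis and an appeal to the weak separation bound), yours is shorter and more conceptual but imports Federer.

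The one genuine weak point is your gloss on Step 2 if it is meant to stand on its own rather than as a citation. The ``marching'' argument --- the foot point stays at $\gamma(s)$ until a second nearest point or focal point occurs, and Step 1 excludes these --- does not follow from Step 1 as stated: a priori the nearest point of $\gamma(s)+tN(s)$ could slide continuously away from $\gamma(s)$ as $t$ grows without any point of the open unit neighbourhood ever acquiring two nearest points. Ruling out exactly this sliding is the nontrivial content of Federer's theorem. The clean fix is his quantitative lemma: $\mathrm{reach}\ge 1$ gives $\langle v,\gamma(\sigma)-\gamma(s)\rangle\le \tfrac12\|\gamma(\sigma)-\gamma(s)\|^2$ for every unit normal $v$ at $\gamma(s)$ and all $\sigma$, whence $\|\gamma(s)+tN(s)-\gamma(\sigma)\|^2\ge (1-|t|)\,\|\gamma(\sigma)-\gamma(s)\|^2+t^2>t^2$ for $\sigma\ne s$ and $|t|<1$; this makes $\gamma(s)$ the unique nearest point of $\Gamma(s,t)$ and injectivity is immediate. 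Your final upgrade from injectivity to embeddedness then goes through as sketched, since $\mathrm{dist}(\Gamma(s,t),\gamma(I))=|t|$ prevents a sequence with $|t_n|\to 1$ from converging into the image of a compact slab. So: either cite the positive-reach theorem explicitly or include the displayed inequality; as written, the marching sketch elides the only step that needs proof.
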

\begin{proof} Suppose by way of contradiction that $\Gamma(s,t)=\Gamma(s',t')$, where $(s,t) \ne (s',t')$, $s,s' \in I$ and $0 \le t,t'<1$. Firstly, we cannot have $t=t'=0$ since $\gamma(I)$ is embedded. Next we cannot have $t=t' \ne 0$ since this would contradict the separation bound. In fact, since $I$ is closed, in this case we can find $t=t' \ne 0$ minimal so that $\Gamma(s,t)=\Gamma(s',t)$ is a point on the medial axis at a distance at most $t<1$ from $\gamma$. 

So it remains to deal with the case that $t \ne t'$. Consider the smallest value of min$\{t,t'\}$ for which  $\Gamma(s,t)=\Gamma(s',t')$ for $s,s' \in I$. (We need $I$ closed to find such a smallest value). Consider the angle between the normal lines $L_N(s), L_N(s')$ at $X=\Gamma(s,t)=\Gamma(s',t')$.  If the angle is zero then $s=s'$ and this is impossible as the normal line is embedded. Since  $I$ is chosen as a small closed interval, the angle between the normal lines will also be small. Now we can perturb $X$ to $\hat X$ along the bisector of this angle. By continuity, we claim that we can find $(\hat s, \hat t)$ near $(s,t)$ and $(\hat s', \hat t')$ near $(s',t')$ so that $\hat X=\Gamma(\hat s, \hat t)=\Gamma(\hat s',\hat t')$ and $\hat t <t, \hat t' <t'$.
 
 The claim follows by analysing the distance to the curve $\gamma$. Note that as $I$ is small, $\hat s, \hat s'$ will be between $s,s'$ in $I$. The reason is by the weak separation bound, there are two circles of radius one which touch $\gamma$ at either of $\gamma(s), \gamma(t)$ (see Figure ?). In particular, $\gamma$ is locally between these circles near $\gamma(s), \gamma(t)$ respectively. Hence if we choose an initial point $X$ lying on the line segment $L$ joining these two circles and perturb it in the direction of a small angle to this line, clearly the closest point on $\gamma$ to the perturbed point will move in the same direction away from $L$ as the perturbation of $X$. Hence since we have chosen the perturbation as the bisector of the angle between the normal lines to $\gamma(s), \gamma(t)$, we see that $\hat s, \hat s'$ will be between $s,s'$ in $I$.

But this contradicts our choice since min$\{\hat t,\hat t'\}$ $<$ min$\{t,t'\}$. So this completes the proof. 
\end{proof}

\begin{definition}\label{crossingcond} Assume that $\gamma$ is an immersed loop satisfying the separation bound for each of a collection of closed intervals covering $S^1$. The  {\bf crossing condition} is the following requirements:
\begin{itemize}
\item $\gamma$ and $\Gamma$ have only double points of self-intersection. In particular, $\gamma$ self-intersects in isolated double points or double intervals (for the latter, the self-intersections are not transverse). Also, for any isolated double point $p$ (or double interval $J$) of $\gamma$, there is a neighbourhood $U_p$ (or $U_J$) so that $\Gamma^{-1}(U_p)=D_1\cup D_2$ (or $\Gamma^{-1}(U_J)=D_1\cup D_2$), where $D_1, D_2$ are disjoint open disks,  see Fig. \ref{fig:ribcross} and Fig. \ref{fig:annulus}.
\item The two branches of $\gamma$ cross at $p$ or $J$, see Fig. \ref{fig:ribcross}.
\item $S^1 \times (-1,1) \setminus D_i$ is contractible, for $i=1,2$. Moreover $S^1 \times (-1,1) \setminus {\mathcal D}$ has $n$ contractible components, where $\mathcal D$ consists of $n$ pre-images of neighbourhoods of all isolated double points and double intervals for $\gamma$. 
\item $D_i \cap (S^1 \times \{0\})$ is connected for $i=1,2$. So $U_p \cap \gamma$ (or $U_J \cap \gamma$) consists of two intersecting open intervals.
\item The pre-images of all double points of $\Gamma$ come from such pairs of disks $D_1, D_2$.
\end{itemize} 
\end{definition}


\begin{figure} [h!]
\centering
\includegraphics[width=1\textwidth]{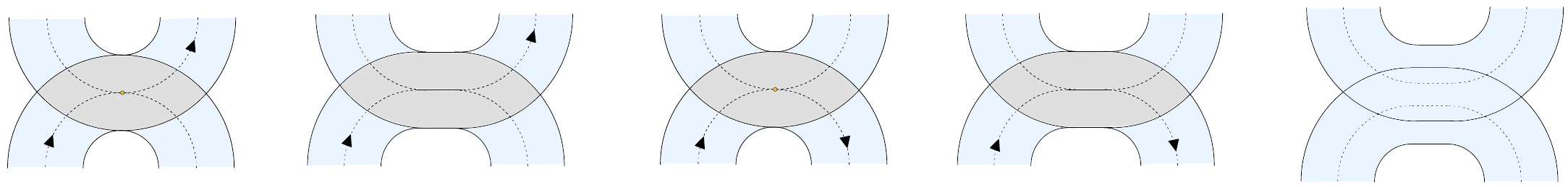}
\caption{The broken lines are the ribbon loop $\gamma$ (core) associated with the open ribbon $\Gamma$, while the arrows show a branch of $\gamma$. The grey areas are neighbourhoods around self-intersections. From left to right: a ribbon self-intersecting at an isolated double point. A ribbon self-intersecting in a double interval. A non-crossing self-intersection at a double point. A non-crossing self-intersection in a double interval. Finally, a self-intersection not satisfying the crossing condition as the ribbon self-intersects but there are no associated double points or intervals of the core.}
\label{fig:ribcross}
\end{figure}

\begin{definition} A $C^1$ immersed loop $\gamma: S^1\to\mathbb R^2$ satisfying the crossing condition and the separation bound for a collection of closed intervals covering $S^1$ is called a  {\bf ribbon loop}. The immersed open annular neighbourhood $\Gamma(S^1 \times (-1,1))$ of a ribbon loop $\gamma(S^1)$ is called the ribbon associated to the loop. A finite collection of $C^1$ loops which together satisfy these two conditions is called a ribbon link. The space of ribbon knots and links has the standard $C^1$ topology. 
\end{definition}


\begin{observation}\label{obs} \hfill 
\begin{enumerate}
\item \label{dcsd} {\bf Critical self-distance}. Suppose that the normal lines $L_N(s), L_N(t)$ coincide for $s \neq t$. Since $\gamma'(s)$ and $\gamma'(t)$ are parallel, we have that $\gamma'(s)\cdot(\gamma(s)-\gamma(t))= \gamma'(t)\cdot(\gamma(s)-\gamma(t))=0$. This gives a double critical self-distance $\|\gamma(s)-\gamma(t)\|$  between $\gamma(s), \gamma(t)$, \cite{litherland}. We observe that a bound on medial axis distance implies a bound on double critical self-distance  between such points $\gamma(s), \gamma(t)$. 

\item \label{curvature}  {\bf Bound on curvature}. If there is a maximal radius circle with centre $c$ which is tangent to $\gamma$ at $\gamma(s)$ satisfying $||\gamma(s)-c||<1$, then this implies that $\gamma$ does not satisfy the separation condition. This implies that the curvature $\kappa$ of $\gamma$ is at most $1$ at all points where a ribbon loop or link $\gamma$ is $C^2$. 

Moreover, ribbon loops or links are $C^{1,1}$ curves. To see this, note for $s$ fixed, if there are sufficiently close points $t \in S^1$ to $s$ satisfying $||\gamma^\prime(s)-\gamma^\prime(t)|| >k|s-t|/\ell $ with $k>1$, then the maximal radius circle touching the loop at $\gamma(s)$ has radius smaller than $1$, which again contradicts the separation condition. So this establishes that ribbon loops are $C^{1,1}$ curves. 

Finally the same argument shows that $C^1$ immersed loops or links satisfying the weak separation bound are also in the class $C^{1,1}$. This will be important when we discuss loops in disk space. 

\item {\bf Non-overlapping condition}. Two branches of the open annular neighbourhood $\Gamma$ of a ribbon loop or link $\gamma$ intersect only if their core components of $\gamma$ intersect, see Fig. \ref{fig:ribcross}.

\item {\bf Complementary regions}. A ribbon loop or link separates the plane into open complementary connected domains each of whose boundary is a piecewise $C^1$ polygon. 
\end{enumerate}
\end{observation}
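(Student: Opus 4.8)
The plan is to establish the four parts of Observation \ref{obs} one at a time, in each case reducing the assertion to the separation bound, to the crossing condition of Definition \ref{crossingcond}, or to Proposition \ref{localribbon}. For the critical self-distance statement, I would first record the elementary fact that if the normal lines $L_N(s)$ and $L_N(t)$ agree for $s\neq t$, then $\gamma(s)$ and $\gamma(t)$ both lie on this common line $L$, and since $L$ is orthogonal to $\gamma'(s)$ and to $\gamma'(t)$, the vector $\gamma(s)-\gamma(t)$ is orthogonal to both tangent vectors; this is the displayed identity, and it exhibits $\|\gamma(s)-\gamma(t)\|$ as a doubly critical self-distance in the sense of \cite{litherland}. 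To obtain the claimed lower bound, I would start at the midpoint of the segment $\gamma(s)\gamma(t)$ and slide along $L$ until the disk centred at the moving point, with radius equal to its distance to $\gamma(s)$, first becomes tangent to $\gamma$ with interior disjoint from $\gamma$; the resulting centre lies on the combined medial axis of $\gamma$, so the separation bound forces its distance to $\gamma$ to be at least $1$, which bounds $\|\gamma(s)-\gamma(t)\|$ below.

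The curvature bound is a pointwise version of the same idea: at a $C^2$ point $\gamma(s)$ with $\kappa(s)>1$, any circle tangent to $\gamma$ at $\gamma(s)$ on the concave side whose radius exceeds $1/\kappa(s)$ must cross $\gamma$ at nearby points, so the maximal such tangent circle has radius at most $1/\kappa(s)<1$ and its centre lies on the medial axis of $\gamma$ restricted to a small interval about $s$, contradicting the separation bound. For the $C^{1,1}$ claim I would upgrade this to an integral estimate: if $\|\gamma'(s)-\gamma'(t)\|>k|s-t|/\ell$ for some $k>1$ and $t$ arbitrarily close to $s$, then the unit tangent turns through more than $k|s-t|$ over an arc of length $|s-t|$, and the same comparison with circular arcs produces a tangent circle of radius less than $1$ centred on the medial axis, again contradicting separation; hence $\gamma'$ is Lipschitz. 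The verbatim argument using the two radius-one circles guaranteed at each point yields the corresponding $C^{1,1}$ regularity under only the weak separation bound.

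The non-overlapping statement follows by combining Proposition \ref{localribbon}, which gives embeddedness of $\Gamma$ over each interval of a small closed cover of $S^1$, with the last clause of Definition \ref{crossingcond}, according to which every double point of $\Gamma$ lies in a pair of disjoint disks $D_1,D_2$ that are pre-images of a neighbourhood of a double point or double interval of $\gamma$; hence two branches of $\Gamma$ meet only where the corresponding branches of $\gamma$ do. The complementary-regions statement then follows from the $C^{1,1}$ regularity just established together with the finiteness of the double-point set built into the crossing condition: $\gamma(S^1)$ is a compact graph in $\mathbb R^2$ that is a finite union of $C^1$ arcs meeting at the crossings, so a standard planar-graph argument shows its complement is a finite union of open connected regions, each bounded by a cyclic concatenation of sub-arcs of $\gamma$, that is, a piecewise $C^1$ polygon; the link case is identical componentwise. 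The one genuinely delicate point is the passage, in the first two arguments, from ``there is a tangent disk of radius less than $1$'' to ``there is a medial-axis point at distance less than $1$ from $\gamma$'': one must choose the candidate centre so that its maximal disk has interior disjoint from all of $\gamma$, and verifying this is where the real work lies.
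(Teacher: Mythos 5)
Your proposal follows essentially the same route as the paper: the orthogonality identity plus the maximal tangent circle/separation-bound comparison for parts (1) and (2), the crossing condition (via Proposition \ref{localribbon} and the last clause of Definition \ref{crossingcond}) for part (3), and the finite planar graph structure for part (4), merely supplying more detail than the paper's inline justifications. The one subtlety you flag at the end (arranging that the candidate maximal disk has interior disjoint from $\gamma$ so that its centre lies on the combined medial axis) is likewise glossed over in the paper, so your account is consistent with, and if anything slightly more careful than, the original.
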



 \begin{figure} [h!]
\centering
\includegraphics[width=1\textwidth]{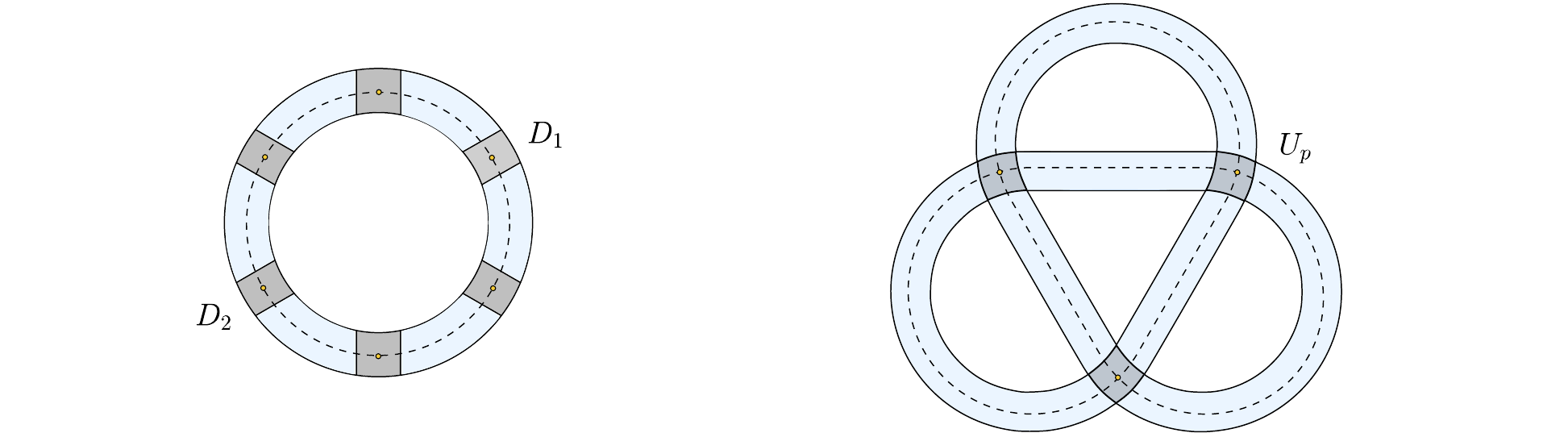}
\caption{For a double point $p$ of $\gamma$ there is a neighbourhood $U_p$ so that $\Gamma^{-1}(U_p)=D_1\cup D_2$, where $D_1, D_2$ are disjoint open disks which separate the annulus into contractible components. Note that $\Gamma(s,0)=\gamma(s)$, for $s\in S^1$.}
\label{fig:annulus}
\end{figure}
 
\begin{proposition} \label{disk} Each complementary region of a ribbon loop or link contains an open disk with radius 1.
\end{proposition}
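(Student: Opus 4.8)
The plan is to prove the equivalent assertion that each complementary region $R$ contains a point lying at distance at least $1$ from $\gamma$. Granting this, if $x\in R$ with $\mathrm{dist}(x,\gamma(S^1))\ge 1$ then the open ball $B(x,1)$ is disjoint from $\gamma(S^1)$; being connected and meeting $R$, it is contained in $R$, which is the desired disk. If $R$ is unbounded this is immediate, since $\partial R\subset\gamma(S^1)$ is compact and hence bounded, so $R$ contains points arbitrarily far from $\gamma$. So assume $R$ is bounded. Then $\overline R$ is compact, the function $f(x)=\mathrm{dist}(x,\gamma(S^1))$ vanishes on $\partial R$ and is positive on $R$ (as $\gamma(S^1)$ is closed), so it attains its maximum value $r>0$ at an interior point $x_0\in R$; it remains to show $r\ge 1$.

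Suppose, for contradiction, that $r<1$. First I would argue that $x_0$ has more than one nearest point on $\gamma$. If the nearest point $\gamma(s_0)$ were unique, then since $\gamma$ is $C^{1}$ (indeed $C^{1,1}$) the function $f$ would be differentiable at $x_0$ with gradient the unit vector $(x_0-\gamma(s_0))/r\ne 0$, contradicting that $x_0$ is a local maximum of $f$; and the set of nearest points cannot fill an arc $\gamma([a,b])$, for that arc would lie on the circle $\partial B(x_0,r)$ and hence have curvature $1/r>1$, contradicting the curvature bound in Observation~\ref{obs}. Hence there are two distinct nearest points $\gamma(s_1)\ne\gamma(s_2)$, each at distance $r<1$ from $x_0$. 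Since $x_0-\gamma(s_i)\perp\gamma'(s_i)$ we have $x_0=\Gamma(s_1,\pm r)=\Gamma(s_2,\pm r)$, so $x_0$ is a double point of the ribbon map $\Gamma$, and the normal lines $L_N(s_1),L_N(s_2)$ both pass through $x_0$.

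Next I would distinguish two cases. If $s_1,s_2$ lie in a common interval $I$ of the collection of closed intervals covering $S^1$ on which $\gamma$ satisfies the separation bound, then $B(x_0,r)$ — which is disjoint from all of $\gamma$ and in particular from $\gamma(I)$ — touches $\gamma(I)$ at the two points $\gamma(s_1),\gamma(s_2)$ with interior disjoint from $\gamma(I)$; one checks that $x_0$ then lies on the medial axis of $\gamma|_I$, and the separation bound forces $\|\gamma(s_1)-x_0\|\ge 1$, contradicting $\|\gamma(s_1)-x_0\|=r<1$. If $s_1,s_2$ do not lie in a common such interval, then $\gamma(s_1),\gamma(s_2)$ lie on two arcs of $\gamma$ separated by a self-crossing, and by the crossing condition (Definition~\ref{crossingcond}) applied to the double point $x_0$ of $\Gamma$ the point $x_0$ lies in a neighbourhood $U_{p^{*}}$ of a double point $p^{*}$ of $\gamma$, with $\gamma(s_1),\gamma(s_2)$ on the two crossing branches; thus locally at $p^{*}$ the region $R$ is a wedge of angle strictly between $0$ and $\pi$ and $x_0\in R\cap U_{p^{*}}$ is equidistant, at distance $r<1$, from the two branches.

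I expect this last case to be the main obstacle: one must rule out that a complementary region degenerates into a thin sliver hugging a pair of almost-parallel branches of $\gamma$ between two crossings. The mechanism I would use is the half-ribbon on the $R$-side of a boundary arc, a strip of width $1$ into $R$; if $\sup_R f=r<1$ then $R\subset\Gamma(S^1\times(-1,1))$, so this half-ribbon must poke across $R$ and meet $\gamma$ again, producing double points of $\Gamma$, which by the crossing condition can occur only inside crossing neighbourhoods — impossible unless $R$ is confined near a single crossing or a pair of consecutive crossings. The remaining bigon-type configuration is then excluded by the requirement in Definition~\ref{crossingcond} that $\Gamma^{-1}(U_p)$ be a union of two disjoint disks each separating $S^1\times(-1,1)$ into contractible pieces: this forces $U_p$ to contain a slab of the ribbon of full width $2$, which for a sufficiently short bigon would also engulf the branches at the other crossing, so that $\Gamma^{-1}(U_p)$ would acquire more than two components. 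Once $r\ge 1$ is established, $B(x_0,1)$ is an open unit disk disjoint from $\gamma$ and, being connected and containing $x_0\in R$, lies in $R$, which completes the proof.
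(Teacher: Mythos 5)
Your reduction is the same as the paper's first step: it suffices to produce a point of the region $R$ at distance at least $1$ from $\gamma$, since $\Gamma$ covers every point at distance less than $1$ from the core. Your treatment of the unbounded region, the existence of an interior maximum $x_0$ of the distance function, the two-nearest-point argument, and the case where both nearest parameters lie in a single interval of the covering (medial axis plus the separation bound) are all sound. The proof fails, however, exactly where you say you expect trouble: the global case in which the two nearest strands are far apart in parameter. There your argument is only a sketch, and its key quantitative claims are not established. The crossing condition places no metric constraint on the neighbourhoods $U_p$ beyond what the contractibility requirement forces, so the statement that double points of $\Gamma$ ``can occur only inside crossing neighbourhoods'' has no force against a long thin complementary region: one can take $U_p$ and $U_q$ large enough to absorb all the overlap points along a sliver, and nothing in your argument shows this must violate the conditions that $\Gamma^{-1}(U_p)$ be two disks, that each $D_i$ meet the core in a connected set, or that the complements be contractible. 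Your final step explicitly assumes ``a sufficiently short bigon'' so that $U_p$ ``would also engulf the branches at the other crossing''; but the dangerous configuration is precisely a long, thin sliver (in-radius less than $1$ but large diameter), which your mechanism does not touch. So as written the second case is a gap, not a proof.

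The paper closes this case (and in fact handles both cases at once) by a homological argument that you could splice in to complete your proof: the crossing condition implies that the image of $\Gamma$ is a regular neighbourhood of the image of $\gamma$ and deformation retracts onto it, so inclusion induces an isomorphism $H_1(\gamma)\to H_1(\Gamma)$. If your maximum value satisfied $r<1$, then the whole region $R$ would lie in $\Gamma$, so the cycle $\partial R\subset\gamma$ would bound the $2$-chain $\overline R$ inside $\Gamma$ and hence be trivial in $H_1(\Gamma)$, while it is non-trivial in $H_1(\gamma)$ because it encircles points of $R$, which lie off $\gamma$. This contradiction gives $r\ge 1$ directly, with no case analysis on where the nearest points sit; the deformation-retract property is exactly the global consequence of the crossing condition that your local, metric reasoning is missing.
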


\begin{proof} Each complementary region $P$ of a ribbon loop or link $\gamma$ is a union of open domains each of whose boundary is a piecewise $C^1$ polygon.  Consider the ribbon $\Gamma$ associated with $\gamma$. (Recall we abuse notation by denoting mappings and images by the same symbol, when this does not create confusion). It is clearly sufficient to show that $P \setminus \Gamma$ is non empty. For if we can find a point $c \in P \setminus \Gamma$ then the open disk of radius $1$ centred at $c$ is clearly in $P$, since $\Gamma$ contains all points distance $<1$ from $\gamma$.  

Suppose by way of contradiction that $P \subset \Gamma$. We claim this contradicts the crossing condition satisfied by $\gamma$. Note that the crossing condition implies that the image of the ribbon $\Gamma$ deformation retracts onto the ribbon loop or link $\gamma$, see Fig. \ref{fig:ribcross} and Fig. \ref{fig:annulus}. In fact the crossing condition shows that the image ribbon can be viewed as an open regular neighbourhood of the image loop or link. Hence there is an isomorphism between first homology groups $H_1(\gamma, \mathbb Z)\to H_1(\Gamma, \mathbb Z)$ induced by inclusion, where we are identifying maps with their images in the plane. But $P$ fills in a non-zero homologous cycle of $\gamma$, namely $\partial P$. So if $P \subset \Gamma$ then the first homology $H_1(\Gamma)$ would not be isomorphic to $H_1(\gamma)$. So this contradiction establishes the result. 

\end{proof}

\begin{observation} \label{link} Proposition \ref{disk} does not extend to a finite collection of $C^1$ ribbon loops unless they collectively satisfy the ribbon conditions. 
\end{observation}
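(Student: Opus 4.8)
The plan is to prove Observation~\ref{link} by exhibiting a counterexample: a finite collection of $C^1$ ribbon loops which fails to satisfy the ribbon conditions collectively and for which some complementary region contains no open disk of radius~$1$.

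The proposed example is a pair of round circles $C_1,C_2$ of radius~$1$ whose centres $O_1,O_2$ lie at distance $d$ with $0<d<2$, so that $C_1$ and $C_2$ meet transversally in two points. First I would check that each $C_i$ is on its own a ribbon loop --- indeed an unknotted one, like the round circle realising the unknot. It is embedded, so the crossing condition is immediate, and its associated ribbon $\Gamma_i$ is the embedded punctured open disk of radius~$2$ about $O_i$. For the separation bound one checks that the medial axis of a sufficiently short closed subarc of $C_i$ consists of the single point $O_i$ (the largest circle tangent to such a subarc on its concave side with interior disjoint from it is the osculating circle, of radius~$1$, while on the convex side no maximal such circle exists), and $O_i$ lies at distance exactly~$1$ from $C_i$. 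Hence $\{C_1,C_2\}$ is a finite collection of $C^1$ ribbon loops.

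Next I would show that $\{C_1,C_2\}$ does not collectively satisfy the ribbon conditions. The ribbons $\Gamma_1$ and $\Gamma_2$ are punctured disks of radius~$2$ with centres less than $2$ apart, so they overlap in a set of positive area that is not confined to small neighbourhoods of the two points of $C_1\cap C_2$: the preimage $\Gamma_1^{-1}(\Gamma_1\cap\Gamma_2)$ contains an essential sub-annulus of $S^1\times(-1,1)$ and so is not a union of disjoint disks, so the crossing condition of Definition~\ref{crossingcond} fails. Equivalently, $\Gamma=\Gamma_1\cup\Gamma_2$ no longer deformation retracts onto $C_1\cup C_2$, so the homological step in the proof of Proposition~\ref{disk} is unavailable. (Even more transparently, one may take $C_1,C_2$ concentric of radii $1$ and $3/2$: the cores are disjoint while the ribbons overlap, directly violating the non-overlapping condition of Observation~\ref{obs}.) Finally I would identify the bad complementary region of $C_1\cup C_2$: the lens consisting of the points inside both circles has, by symmetry, its largest inscribed disk centred at the midpoint of $O_1O_2$, of radius $1-d/2<1$, and so contains no open disk of radius~$1$. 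This contradicts Proposition~\ref{disk} and proves the observation.

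The only step requiring genuine care is the verification of the separation bound for a round unit circle, i.e. the identification of the medial axis of a short circular arc; the remaining checks are elementary plane geometry. What the example must really capture is that the proof of Proposition~\ref{disk} uses that the ribbon is a regular neighbourhood of its core, so that inclusion induces an isomorphism on $H_1$ --- precisely what the crossing and non-overlapping conditions guarantee and precisely what is lost when ribbon loops are merely assembled without being required to fit together into a ribbon link.
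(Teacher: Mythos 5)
Your proof is correct. Note that the paper offers no proof of Observation~\ref{link} at all: it is stated as a bare assertion, repeated in the Discussion section ("it is clear that the weaker property of a link being a finite union of ribbon knots is not sufficient"), so your explicit counterexample supplies exactly what the paper leaves implicit. The example does what is needed: each unit circle is individually a ribbon loop (it is the round-circle unknot the paper itself uses; its ribbon $\Gamma_i$ is the embedded punctured open disk of radius $2$, so the crossing condition is vacuous, and the relevant medial axis point is the centre, at distance exactly $1$, so the separation bound holds); collectively the crossing condition of Definition~\ref{crossingcond} fails, since $\Gamma_1\cap\Gamma_2$ is the lens of the two radius-$2$ disks minus the centres, whose preimage in either annulus contains an essential loop and hence cannot be a disjoint union of disks attached to the two core crossings; and your inradius computation $1-d/2<1$ for the lens bounded by the cores correctly exhibits a complementary region with no open unit disk, so the conclusion of Proposition~\ref{disk} fails. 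Two small points of precision: in the transverse example it is indeed the crossing condition, not the non-overlapping condition, that is violated (the cores do meet, so the non-overlapping condition as literally stated in Observation~\ref{obs} is not the obstruction), and you handle this correctly; and in your parenthetical concentric example (radii $1$ and $3/2$) you should name the annular complementary region between the two cores as the one lacking a unit disk, since the innermost complementary region is itself an open disk of radius $1$ and so does contain one.
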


\begin{definition} A $C^1$ immersed planar loop or link is said to satisfy the {\bf double point condition} if it has finitely many isolated double points and double intervals.
\end{definition}

\begin{theorem}\label{doublepoint} Ribbon loops and links satisfy the double point condition. 
\end{theorem}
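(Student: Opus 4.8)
The plan is to argue by contradiction, using Proposition~\ref{disk} as the essential tool. Suppose a ribbon loop (or link) $\gamma$ fails the double point condition, so it has infinitely many isolated double points or infinitely many double intervals. First I would record two consequences of $\gamma$ being a $C^1$ immersion with $\|\gamma'\|=1/\ell\neq 0$. No point $q$ can have an infinite preimage $\gamma^{-1}(q)$, since an accumulating sequence in $\gamma^{-1}(q)$ would force $\gamma'=0$ there; and $\gamma$ is injective on a neighbourhood of every parameter. Combining the finiteness of $\gamma^{-1}(q)$ with compactness, for every $q\in\gamma(S^1)$ there is $\epsilon>0$ such that $\gamma\cap B_\epsilon(q)$ is a union of finitely many embedded arcs $\alpha_1,\dots,\alpha_m$ ($m=|\gamma^{-1}(q)|$), each meeting $q$ exactly once and each contained in $B_\epsilon(q)$; for links one also uses that there are finitely many components. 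A similar argument shows the set of double points is closed.

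By compactness of $\gamma(S^1)$ the infinitely many double-point components accumulate at some point $q$; since double points accumulate at $q$ we have $m\geq 2$ there. Near $q$ every double point is an intersection point of two of the branches $\alpha_i$, so by pigeonhole over the finitely many pairs, some pair $\alpha=\alpha_i$, $\beta=\alpha_j$ has $\alpha\cap\beta$ with infinitely many connected components accumulating at $q$. I would then show $\alpha$ and $\beta$ must be tangent at $q$: if their tangent lines at $q$ were distinct, writing each branch as a graph over its tangent line and substituting one graph into the other shows, using only that each graphing function is $o(\cdot)$ at $q$, that $\alpha\cap\beta$ reduces to the single point $q$ near $q$. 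Hence $\alpha$ and $\beta$ share a tangent line $L$ at $q$; writing them as graphs $y=f(x)$, $y=g(x)$ over $L$ with $f(0)=g(0)=0$ and $f'(0)=g'(0)=0$, the continuous function $h=f-g$ has zero set $\{h=0\}$ with infinitely many components accumulating at $0$.

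Finally I would extract the contradiction. Choose consecutive components of $\{h=0\}$ shrinking toward $0$; over the gap $(u_k,v_k)$ between two such components, the graphs of $f$ and $g$ enclose a ``bigon'' region $R_k$ whose boundary lies on $\alpha\cup\beta\subseteq\gamma$ and with $\operatorname{diam}R_k\to 0$. Since $\gamma(S^1)$ has measure zero, $R_k\setminus\gamma\neq\emptyset$; and if a complementary region $P$ of $\gamma$ meets $R_k$, then being connected and disjoint from $\partial R_k\subseteq\gamma$ it must be contained in $R_k$. By Proposition~\ref{disk}, $P$ contains a disk of radius $1$, forcing $\operatorname{diam}R_k\geq 2$, which contradicts $\operatorname{diam}R_k\to 0$ for large $k$. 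The case of infinitely many double intervals is handled identically, taking the $R_k$ to be the bigons between consecutive double intervals along $\alpha$ and $\beta$, and the link case is word for word the same using the link version of Proposition~\ref{disk}.

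I expect the main obstacle to be the local normal-form step: proving cleanly that near an accumulation point the curve is a finite union of embedded branches, that the offending pair is forced to share a tangent line, and that the resulting bigons $R_k$ genuinely collapse to the point $q$ (rather than merely having small area). Once that local picture is secured, Proposition~\ref{disk} does the real work and the contradiction is immediate.
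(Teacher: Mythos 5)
Your argument is correct, but it is genuinely different from the paper's. The paper's proof is almost immediate from the crossing condition: each isolated double point or double interval comes with a neighbourhood $U_p$ (or $U_J$) such that $\Gamma^{-1}(U_p)=D_1\cup D_2$ with $D_i\cap(S^1\times\{0\})$ connected, so $U_p\cap\gamma$ is exactly two intersecting intervals and contains no other double component; hence each component of the (closed) double set is relatively open in it, and compactness forbids accumulation, giving finiteness in two lines. You instead work at a hypothetical accumulation point $q$: the only-double-points clause gives exactly two local branches there (so your pigeonhole over pairs is not even needed), transversality at $q$ would isolate the intersection, so the branches are tangent, and the gaps between consecutive intersection components bound bigons $R_k$ with $\partial R_k\subset\gamma$ and diameter tending to $0$; any complementary component meeting $\operatorname{int}R_k$ is trapped inside $R_k$, contradicting Proposition \ref{disk}. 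This is legitimate — Proposition \ref{disk} precedes the theorem and its proof does not use finiteness of the double set — and it shows that the unit-disk property together with ``double points only'' already rules out accumulation, but it costs the local normal-form work you flag (finitely many branches, forced tangency, graphs over the common tangent, shrinking bigons), none of which the paper needs. If you write it up, make explicit that for a ribbon loop the crossing condition forces every double point or interval to be a crossing, so the complementary regions in Proposition \ref{disk} are precisely the connected components of $\mathbb R^2\setminus\gamma$, which is exactly what your trapping step uses.
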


\begin{proof} Clearly the double points (or intervals) of a $C^0$ loop form a closed set in $S^1$. If there are infinitely many isolated double points or double intervals, then these accumulate at a double point or double interval. But this contradicts the crossing condition satisfied by ribbon loops. The explanation is the existence of disks $D_1,D_2$ associated with double points or double intervals (see Fig \ref{fig:annulus}) implies that the connected components of the set of double points in $S^1$ are the individual double points and double intervals, since pairs of such points and intervals are separated by open sets. (The closed subsets of the plane have the usual Hausdorff topology). Hence, there cannot be an accumulation point in this set. The same argument clearly applies to a ribbon link. 
\end{proof}

We immediately have the following.

\begin{corollary} \label{finitepartition}
A ribbon loop or link partitions the plane into a finite number of open disk regions with exactly one being unbounded. 
\end{corollary}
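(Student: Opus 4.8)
The plan is to read this off from Theorem~\ref{doublepoint} together with the structural facts recorded in Observation~\ref{obs}. By Theorem~\ref{doublepoint} a ribbon loop or link $\gamma$ has only finitely many isolated double points and double intervals, so its image $\Sigma=\gamma(S^1)$ is a finite embedded graph in $\mathbb R^2$: take as vertices the isolated double points together with one vertex at each end of each double interval, and as edges the finitely many $C^1$ subarcs into which $\gamma$ is thereby cut, so that every vertex has valence at most four. This is exactly compatible with Observation~\ref{obs}, which already tells us that the closure of each complementary region is a finite piecewise $C^1$ polygon bounded by such edges. I would then pass to the one-point compactification $S^2=\mathbb R^2\cup\{\infty\}$, in which $\Sigma$ is still a finite graph.

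Finiteness of the partition and the claim about the unbounded region are now standard. The complement of the closed set $\Sigma$ in $S^2$ is open, hence a disjoint union of open connected components by local connectedness of $S^2$; and a finite graph with $V$ vertices and $E$ edges embedded in $S^2$ has at most $2-V+E$ complementary regions, in particular finitely many. Exactly one of these regions contains $\infty$, so intersecting with $\mathbb R^2$ shows that $\gamma$ cuts the plane into finitely many open regions, precisely one of which — the one meeting every neighbourhood of $\infty$ — is unbounded, all the others being bounded.

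It remains to show that each complementary region is homeomorphic to an open disk, and this is the step requiring the most care. I would use that $\Sigma$ is connected: for a ribbon loop this is automatic since $\Sigma$ is the continuous image of $S^1$, and for a ribbon link it follows from the crossing condition in Definition~\ref{crossingcond} (cf. Observation~\ref{link}). Given a complementary region $F\subset S^2$, the complement $S^2\setminus F$ is the union of $\Sigma$ with the closures of the remaining regions; the closure of each such region has its boundary contained in $\Sigma$ and therefore meets the connected set $\Sigma$, so $S^2\setminus F$ is connected. By Alexander duality an open subset of $S^2$ with connected complement is simply connected, and a simply connected proper open subset of $S^2\cong\mathbb{CP}^1$ is homeomorphic to $\mathbb R^2$ (for instance by the Riemann mapping theorem). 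Hence every complementary region is an open disk, and deleting $\infty$ from the one containing it again leaves an open disk, which completes the argument. Apart from this topological step, the only thing to watch is the bookkeeping in the first paragraph — that double intervals and the non-transverse tangencies permitted by the crossing condition genuinely fit the finite-graph model — but the local normal forms supplied by Definition~\ref{crossingcond} (the disks $D_1,D_2$, and $U_p\cap\gamma$ or $U_J\cap\gamma$ being two intersecting arcs) make this routine rather than a genuine obstacle.
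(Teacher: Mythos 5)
The paper offers no separate argument here: the corollary is stated as an immediate consequence of Theorem~\ref{doublepoint} together with Observation~\ref{obs} (complementary regions are bounded by piecewise $C^1$ polygons) --- finitely many isolated double points and double intervals make the image a finite $4$-valent graph in the plane, so there are finitely many complementary regions and exactly one of them is unbounded. Your first two paragraphs reproduce exactly this reasoning (graph structure, pass to $S^2$, Euler count, the region containing $\infty$) and are fine.

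The extra work you invest in proving that every region is an open disk is where the two genuine soft spots lie. First, your connectivity claim for links is not justified: nothing in Definition~\ref{crossingcond} forces distinct components of a ribbon link to intersect --- the crossing condition only constrains what happens at double points and is vacuous for a split pair of components --- so a diagram consisting of two disjoint (for instance concentric) circles satisfies all the ribbon conditions, its image $\Sigma$ is disconnected, and a bounded complementary region can then be an annulus rather than a disk. Hence ``it follows from the crossing condition (cf.\ Observation~\ref{link})'' is not a proof; the disk conclusion genuinely requires connectedness of the image, which is automatic for a ribbon loop but must be assumed (or read into the statement) for links. Second, your final sentence is false as written: deleting $\infty$ from the open disk of $S^2$ containing it leaves a punctured disk, so the unbounded planar region is an annulus already for the round unknot. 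This is partly a looseness in the corollary's own wording --- the content actually used later (e.g.\ in Theorem~\ref{bound}) is only that there are finitely many complementary regions, exactly one unbounded, with the bounded ones disks when the diagram is connected --- but your write-up should not assert that the puncture is invisible. Apart from these two points, your Alexander duality / Riemann mapping argument for the bounded regions being disks is correct and is more detail than the paper itself records.
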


\begin{definition} \label{diskspace} Disk space $\mathcal D$ is the space of finite collections of $C^1$ immersed planar loops satisfying the following;

\begin{itemize}
\item the double point condition 
\item the weak separation condition
\item there is an open disk of radius 1 contained in each complementary region. Here complementary regions are obtained by taking the unions of complementary connected open domains which are adjacent at non-crossing isolated double points and intervals. (See Fig. \ref{fig:ribcross} \  third and fourth diagrams).

\end{itemize}
Also $\mathcal D$ has the standard $C^1$ topology. 
 \end{definition}
 
 The elements in $\mathcal D$ are called disk diagrams. In particular, a ribbon loop or link is an element of $\mathcal D$. 

\begin{observation}
A connected component of disk space $\mathcal D$ consists of all representations of a finite collection of $C^1$ immersed planar loops, satisfying the required properties, with the same combinatorial type of complementary regions.  Here the combinatorial type of a complementary region is defined by splitting the region open along all non-crossing double points and intervals using the resulting polygonal type. Equivalently, the loop or link can be perturbed to remove all non-crossing double points and intervals and the resulting complementary domains have the required combinatorial types.
\end{observation}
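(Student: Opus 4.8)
The claim characterises the connected components of $\mathcal D$: two disk diagrams lie in the same component exactly when they share the same combinatorial type of complementary regions. The plan is to prove two facts: (i) the combinatorial type is \emph{locally constant} on $\mathcal D$, i.e.\ every disk diagram has a $C^1$-neighbourhood throughout which the combinatorial type is unchanged; and (ii) any two disk diagrams of the same combinatorial type are joined by a path inside $\mathcal D$. Granting (i) and (ii), the combinatorial classes partition $\mathcal D$ into open, path-connected subsets, hence coincide with the connected components. I would begin with the preliminary matching the ``Equivalently'' clause: the finitely many non-crossing double points and intervals of a disk diagram (finite by the double point condition) can be resolved by a path in $\mathcal D$---opening up each fold, after first rescaling if necessary to create room---without changing the combinatorial type, which by Definition~\ref{diskspace} is by definition that of the resolved diagram. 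So in (ii) we may assume both diagrams have only crossings, whereupon the combinatorial type is just the isotopy class in $S^2$ (the combinatorial map) of the associated $4$-valent planar graph with its faces.

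For (i), suppose $\tilde\gamma\in\mathcal D$ is a $C^1$-small perturbation of $\gamma\in\mathcal D$ with a different combinatorial type. A change of combinatorial type forces a net change in the set of crossings or in their combinatorial arrangement, which can only happen through an elementary bifurcation: an R3-type move, where three branches meet at a point, or an R1- or R2-type move, where the number of crossings changes. A triple point is neither an isolated double point nor a double interval, so it violates the double point condition; thus no R3-type move occurs inside $\mathcal D$, and since $\gamma\in\mathcal D$ has all its crossings at a bounded transverse angle, a $C^1$-small perturbation cannot flatten a crossing into a tangency either. An R1- or R2-type move produces a monogon or bigon complementary region bounded by genuine crossings, which by the convention of Definition~\ref{diskspace} is a complementary region in its own right; since $\gamma\in\mathcal D$, such a region in $\gamma$ has area at least $\pi$, so it can be neither created nor destroyed by a $C^1$-small perturbation. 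Hence $\tilde\gamma$ has the same combinatorial type as $\gamma$: contradiction. So (i) holds.

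For (ii), let $\gamma,\gamma'$ be disk diagrams of the same combinatorial type, each with only crossings by the reduction. By the classical rigidity of planar embeddings, the combinatorial map of the $4$-valent crossing graph---equivalently, the collection of complementary regions together with their boundary words and edge gluings---determines the diagram up to an orientation-preserving homeomorphism of $S^2$, and hence up to an ambient isotopy of $\mathbb R^2$ (tracking the unbounded face, and in the link case the way the components are nested among the faces). To realise such an isotopy inside $\mathcal D$, pass to a common large scale: dilating a disk diagram by a factor $\lambda>1$ is a path in $\mathcal D$, because curvature scales by $1/\lambda$, so the weak separation bound is preserved, and every complementary region is enlarged, so the disk condition is preserved. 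For $\lambda$ large enough one can join $\lambda\gamma$ to $\lambda\gamma'$ by an ambient isotopy carried out so that throughout it all arcs have curvature $\le 1$ and every complementary region contains an open disk of radius $1$; at large scale there is ample room to interpolate between the two diagrams while respecting these bounds. Concatenating the dilation path $\gamma\rightsquigarrow\lambda\gamma$, this isotopy, and the dilation path $\lambda\gamma'\rightsquigarrow\gamma'$ gives the required path in $\mathcal D$.

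The hard part will be the bifurcation analysis underlying (i). Since $\mathcal D$, unlike the space of ribbon loops, imposes no crossing condition, the perturbations and paths involved may pass through configurations with non-crossing double points and double intervals, and one must verify carefully that a genuine change of combinatorial type cannot be slipped through such a degenerate configuration---in particular, that a transition between a crossing and a non-crossing tangency (which adds or removes a crossing once non-crossing features are resolved) is incompatible with staying in $\mathcal D$, because near such a transition either three branches collide or a complementary region of vanishing area appears. A secondary technical point is the geometric control needed in (ii): confirming that the ambient isotopy between the two large-scale diagrams really can be performed without ever violating the weak separation bound or the disk condition.
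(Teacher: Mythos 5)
The paper offers no proof of this Observation: it is asserted as following from the definitions, with the only supporting remark being the next Observation (complementary regions contain unit disks and so cannot vanish under limits, while non-crossing double points and intervals are quotiented away by the definition of combinatorial type). So your proposal is an attempt to supply an argument the authors left implicit, and its architecture --- (i) local constancy of the combinatorial type on $\mathcal D$, (ii) path-connectedness of each combinatorial class, hence classes are clopen and connected --- is the natural one and is consistent with what the paper is implicitly relying on.

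However, there are genuine gaps at exactly the two places you flag, and one intermediate claim is false as written. In (i) you assert that ``$\gamma\in\mathcal D$ has all its crossings at a bounded transverse angle''; this is not true in disk space, which imposes no crossing condition and explicitly admits double intervals and tangential double points (see the Olympic-rings example in the paper, where the minimiser has a tangential crossing). Consequently the R1/R2/R3 bifurcation framing, which presupposes generic transverse crossings, does not directly apply when comparing $\gamma$ with a nearby $\tilde\gamma\in\mathcal D$. The correct mechanism is different: a double point at which the two branches genuinely cross cannot be removed by a $C^1$-small perturbation for topological reasons (odd local intersection), whatever the angle; creation of new complementary regions (monogons or small bigons, e.g.\ when a tangential crossing is perturbed into three transverse ones, or a non-crossing tangency is pushed into two crossings) is blocked because such regions are too small to contain a unit disk, so the perturbed loop would leave $\mathcal D$; and appearance or disappearance of non-crossing tangencies is harmless by definition. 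You gesture at this but do not carry it out, and the degenerate cases (double intervals, crossings degenerating to tangencies along a path) are precisely where the content lies. In (ii) there are two unproved steps: first, the paper's notion of ``same combinatorial type'' is an abstract bijection of complementary domains preserving boundary structure, and upgrading this to an ambient isotopy of the plane is not ``classical rigidity'' in general --- Whitney-type uniqueness of planar embeddings requires $3$-connectivity, and orientation/mirror questions are not addressed; second, the existence of an interpolating isotopy that keeps curvature at most $1$ and a unit disk in every complementary region throughout is asserted (``ample room at large scale'') rather than constructed, e.g.\ by first deforming both diagrams to a standard polygonal-with-rounded-corners representative of the common plane graph. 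The dilation trick itself is fine. So: sound strategy, more than the paper provides, but not yet a proof.
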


\begin{observation}
Note that as complementary regions contain open disks of radius 1, they cannot disappear under limits of sequences of loops. New non-crossing double points can appear or disappear under limits, but by definition, these do not change the combinatorial type of complementary regions.
\end{observation}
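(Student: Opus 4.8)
The statement bundles two essentially independent claims, which I would establish separately. For the first — that complementary regions cannot disappear under limits — the plan is to use the embedded unit disks as a compactness device. Let $\gamma_n \to \gamma$ be a $C^1$-convergent sequence of disk diagrams; by the double point condition each $\gamma_n$ has only finitely many complementary regions, so after passing to a subsequence I may assume this number is constant and handle all the regions simultaneously. For a given region let $B(c_n,1)$ be the unit disk it contains, so that $\|\gamma_n(s)-c_n\|\ge 1$ for every $s\in S^1$. Because $C^1$ convergence confines the images $\gamma_n(S^1)$ to a common bounded set, the centres $c_n$ are bounded and a further subsequence satisfies $c_n\to c$. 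Passing the inequality to the limit yields $\|\gamma(s)-c\|\ge 1$ for all $s$, so the open disk $B(c,1)$ misses $\gamma$; being connected, it lies in a single complementary domain of $\gamma$. Thus every complementary region of $\gamma_n$ produces a complementary domain of $\gamma$ of area at least $\pi$, and this is the precise sense in which a region cannot be destroyed: uniform convergence cannot drive an area bounded below by $\pi$ to zero.

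The step I expect to be the real work is excluding the merging of two distinct regions of $\gamma_n$ into one region of $\gamma$, which the disk argument alone does not forbid. My plan is to show that the arcs separating adjacent regions persist in the limit. Separation of regions is governed by the crossing double points and intervals, and these are stable under $C^1$ limits: a transverse crossing survives a small perturbation by the implicit function theorem, while a non-transverse crossing along a double interval is insulated by the disjoint disks $D_1,D_2$ of the crossing condition (Definition \ref{crossingcond}), whose separation of the local neighbourhood into contractible pieces cannot be undone under a $C^1$ limit. Consequently the separating arcs do not vanish, the limit centres $c$ belonging to distinct regions land in distinct complementary domains of $\gamma$, and no region is lost.

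For the second claim I would argue directly from the definition of combinatorial type. By the preceding observation, the combinatorial type is read off only after splitting open along — equivalently, perturbing away — every non-crossing double point and interval; hence such points are invisible to the combinatorial data. In a $C^1$ limit two formerly disjoint branches may come into contact, producing a new double point, or an existing contact may be released. Any double point created in the limit is a tangency and therefore non-crossing, since a transverse crossing would, by the openness of transversality, already be present in the nearby $\gamma_n$. Because splitting open removes exactly these non-crossing points, the resulting polygonal type, and with it the combinatorial type, is unchanged. No estimate is needed here: the assertion is essentially a reformulation of how combinatorial type was defined, and the only point to verify is that a contact produced in the limit is non-crossing, which the transversality argument supplies.
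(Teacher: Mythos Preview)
The paper offers no proof of this observation; it is asserted without justification, and the later proof of Theorem~\ref{ribcs} simply reiterates the claim (``it is still true that the complementary regions of $\hat\gamma$ match those of members of $\gamma_i$ under limits'') without further argument. Your proposal is therefore considerably more rigorous than the paper itself.

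That said, there is a genuine gap in your argument. To show that separating arcs persist in the limit you appeal, for non-transverse crossings along double intervals, to ``the disjoint disks $D_1,D_2$ of the crossing condition (Definition~\ref{crossingcond})''. But the crossing condition is part of the definition of a \emph{ribbon} loop, not of an element of disk space. By Definition~\ref{diskspace}, a disk diagram is only required to satisfy the double point condition, the weak separation condition, and the unit-disk-in-each-region condition; nothing guarantees the existence of the disks $D_1,D_2$ or the contractibility properties you invoke. Since the observation is explicitly about sequences in disk space, this piece of your argument uses structure that is not available.

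The repair is to bypass the crossing condition entirely and argue topologically. Whether two local branches of a $C^1$ curve cross at a double point or double interval is determined by which sides of each other the branches exit on (equivalently, by a local mod~$2$ intersection number with a small transversal), and this data is stable under $C^0$ convergence of the branches. Hence a crossing self-intersection of $\gamma_n$ cannot degenerate to a non-crossing one in the limit, and conversely any double point of $\gamma$ that is not a limit of double points of $\gamma_n$ must be non-crossing, since a crossing in $\gamma$ would force nearby crossings in $\gamma_n$. This replaces your appeal to Definition~\ref{crossingcond} and completes the argument along the lines you outlined.
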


\section{Ribbonlength minimisers}

For a ribbon loop $\gamma: S^1\to \mathbb R^2$, we have defined the ribbon map $\Gamma:S^1\times(-1,1)\to \mathbb R^2$ whose image is also immersed, and abused notation by identifying a map with its image. We use similar notation for ribbon links. 

\begin{definition}

The combinatorial type of a ribbon loop or link is defined by the collection of complementary connected domains.  Two sets of domains are equivalent if there is a bijection between them preserving the boundary structure along the loop or link, where the corresponding polygonal types of the open complementary connected domains are isomorphic. By boundary structure, we mean divide the boundary into arcs bounded by vertices which are either isolated double points or ends of double intervals. 

\end{definition}

By convention, we use representative of a given ribbon diagram to mean any ribbon diagram with the same combinatorial type. 


\begin{definition} A ribbon diagram is $cs$ if its core is a finite $C^1$ concatenation of arcs of circles of radius 1 and line segments. The complexity of a $cs$ ribbon diagram is the number of arcs of circles of radius 1 plus the number of line segments. 
\end{definition}

\begin{theorem}\label{minimalribbon}
Suppose that $\gamma$ is a ribbon diagram. Then minimal length ribbon representatives of $\gamma$ exist.
\end{theorem}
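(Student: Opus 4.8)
The plan is to fix the combinatorial type of the ribbon diagram $\gamma$, take a length-minimising sequence of ribbon representatives, and extract a $C^1$-convergent subsequence whose limit is again a ribbon representative of the same type and realises the infimal length. To set this up, let $\ell_0=\inf\ell(\delta)$ over all ribbon diagrams $\delta$ with the same combinatorial type as $\gamma$; since $\gamma$ is itself such a diagram $\ell_0<\infty$, and since $\gamma$ has a bounded complementary region containing an open unit disk (Proposition~\ref{disk}, Corollary~\ref{finitepartition}), the core must enclose a unit disk, so $\ell_0>0$ (indeed $\ell_0\ge 2\pi$). Choose representatives $\gamma_n$ with $\ell(\gamma_n)=\ell_n\to\ell_0$, each parametrised on $S^1$ so that $\|\gamma_n'\|\equiv 1/\ell_n$; applying a translation we may assume every $\gamma_n$ passes through the origin, so by the uniform length bound all the images lie in a single fixed compact ball.

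Next I invoke compactness. By Observation~\ref{obs}(\ref{curvature}) every $\gamma_n$ is $C^{1,1}$ with curvature at most $1$, hence $\|\gamma_n''\|\le 1/\ell_n^2$ almost everywhere, so $\{\gamma_n\}$ and $\{\gamma_n'\}$ are uniformly bounded and equicontinuous, and the Arzel\`a--Ascoli theorem yields a subsequence, not relabelled, converging in $C^1$ to a loop $\gamma_\infty$. Since $\|\gamma_n'\|\equiv 1/\ell_n\to 1/\ell_0$ uniformly, we get $\|\gamma_\infty'\|\equiv 1/\ell_0\neq 0$, so $\gamma_\infty$ is a $C^1$ immersed loop and, by the parametrisation convention, has length exactly $\ell_0$. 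The curvature bound and $C^{1,1}$ regularity are preserved under $C^1$ limits, so by Observation~\ref{obs}(\ref{curvature}) again $\gamma_\infty$ still satisfies the weak separation bound.

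The crux is to verify that $\gamma_\infty$ is a ribbon diagram of the same combinatorial type as $\gamma$, i.e. that it satisfies the double point and crossing conditions with the same pattern of complementary domains. Here I use the rigidity provided by the unit disks: each complementary region of $\gamma_n$ contains a disk of radius $1$, and after passing to a further subsequence the centres of these disks converge, so the limiting disks of radius $1$ persist and lie in complementary regions of $\gamma_\infty$. Hence no complementary region can collapse, which (i) pins down the crossing double points and double intervals of $\gamma_\infty$ as the $C^1$-limits of those of $\gamma_n$, since a crossing angle cannot degenerate to $0$ without squeezing out a complementary region, and (ii) forces the adjacency pattern of the complementary domains, hence the combinatorial type, to agree with that of $\gamma_n$ and therefore with that of $\gamma$. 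New non-crossing double points or intervals that might appear in the limit do not affect the combinatorial type, by definition of disk space (Definition~\ref{diskspace}). Finally, the disjoint disks $D_1^n,D_2^n\subset S^1\times(-1,1)$ associated to each crossing of $\gamma_n$ converge to disjoint disks with the same separating and contractibility properties, so the crossing condition holds for $\gamma_\infty$; since the number of crossings is finite and bounded along the sequence, the double point condition holds as well.

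It then follows that $\gamma_\infty$ is a ribbon representative of $\gamma$ of length $\ell_0$, hence a minimal length representative, which proves the theorem. I expect the main obstacle to lie precisely in this last structural step — excluding degenerations of the self-intersection data in the $C^1$ limit (crossings merging, becoming non-transverse double intervals, or spurious crossings forming) — and the decisive tool for doing so is the uniform presence of radius-$1$ disks in all complementary regions, which rules out collapse of any region of the diagram.
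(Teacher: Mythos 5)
Your compactness setup (minimising sequence, Arzel\`a--Ascoli via the $C^{1,1}$ bound, a $C^1$ limit of the correct length) matches the paper's first step, but the crux of your argument contains a genuine gap, and it is exactly the point where the paper has to work hardest. You claim that the presence of a unit disk in every complementary region ``pins down'' the crossing data of the limit, so that the double points and double intervals of $\gamma_\infty$ are just the $C^1$-limits of those of $\gamma_n$ and the disks $D_1^n,D_2^n$ converge to disks with the same separating and contractibility properties. The unit disks do prevent complementary regions from collapsing, but they do not prevent two strands of $\gamma_n$ from becoming tangent in the limit, nor a crossing neighbourhood from acquiring extra double points: a new tangential self-touching does not squeeze out any complementary region (the required unit disk can sit elsewhere in a non-convex region), yet it violates the crossing condition of Definition \ref{crossingcond} (the branches must cross at every double point or interval, $\hat D_j\cap(S^1\times\{0\})$ must be connected, and the complements of the limit disks need not be contractible --- see Fig.~\ref{fig:0123}b). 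So the limit of a minimising sequence of ribbon representatives need not itself be ribbon, and your assertion that it is cannot be justified by disk persistence alone; the appeal to Definition \ref{diskspace} (new non-crossing double points being harmless) is about disk space, not about the ribbon crossing condition that the theorem requires.

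The paper closes this gap with an argument you do not have: if the double set inside a crossing neighbourhood $\hat U$ is disconnected, one takes the two arcs $\lambda_1,\lambda_2$ of $\hat\gamma$ joining two of its components and, when the angle at the endpoint $x$ is non-zero, performs the Meeks--Yau exchange and round-off trick \cite{meeks} --- swap $\lambda_1$ and $\lambda_2$ and smooth the corner at $x$ with arcs of unit circles --- which strictly decreases length and contradicts minimality; when the angle is zero (a genuine tangency in the limit), the limit really can fail to be ribbon, and one must \emph{modify} it by replacing $\lambda_2$ with a copy of $\lambda_1$, which does not increase length and restores the crossing condition (including contractibility of the complements of the modified disks $D_1^*,D_2^*$). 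In other words, the minimiser is in general obtained as a length-preserving modification $\gamma^*$ of the limit, not as the raw limit $\gamma_\infty$ you produce. A smaller point: for the limit you only verify the weak separation bound, whereas a ribbon diagram requires the separation bound on a cover by closed intervals; the paper checks this directly by taking limits of maximal circles touching the $\gamma_i$. Without the exchange/round-off and zero-angle modification step, your proof does not establish the theorem.
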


\begin{proof} By Observation \ref{obs}, we know that the core of a ribbon knot or link diagram is in the class $C^{1,1}$. Therefore, we can apply the Arzela-Ascoli theorem and conclude that given a sequence $\gamma_i$ of ribbon representatives of $\gamma$, so that the lengths $||\gamma_i||$ converge to the infimum of lengths in the class of $\gamma$, there are convergent subsequences of the $\gamma_i$. The limit of such a subsequence is a loop or link $\hat \gamma$ which is $C^1$ and realises the least length in the class of $\gamma$.  

We will show that either $\hat \gamma$ satisfies the ribbon conditions and $\hat \gamma$ is a representative of $\gamma$ or this is true after a modification of $\hat \gamma$ which does not change its length. We will denote the subsequence converging to $\hat \gamma$ by $\gamma_i$ again. For the separation bound, note that none of the loops or links $\gamma_i$ have points in their medial axes closer than distance 1 to the loops or links. But then it is easy to deduce the same is true for $\hat \gamma$. To verify this, choose a sequence of maximal circles touching each $\gamma_i$ at a point $x_i$, so that the sequence $x_i$ converges to $x$ on $\hat \gamma$ and the circles are all on sides of $\gamma_i$ corresponding to a given side of $\hat \gamma$ at $x$. Then there is a subsequence of these circles which converges to a circle touching $\hat \gamma$ at $x$. So the radius of a maximal circle touching $\hat \gamma$ at $x$ is at least $1$ as claimed. This implies that $\hat \gamma$ satisfies the separation bound. 

For the crossing condition, we claim that double points of $\hat \Gamma$ are limits of double points for $\Gamma_i$. Notice that the domain and image of $\Gamma_i, \hat \Gamma$ are open sets. If there are points $(s,t) \ne (s',t')$ with $\hat \Gamma(s,t)=\hat \Gamma(s',t')$ then $\hat \Gamma(s,t) = \hat \Gamma(s',t') =lim \Gamma_i(s,t)=lim\Gamma_i(s',t')$. The image of $\Gamma_i$ contains a disk of some fixed radius about  $\Gamma_i(s,t)$ and similarly for $\Gamma_i(s',t')$, by convergence of $\Gamma_i$ to $\hat \Gamma$. Hence for $i$ large enough, $\Gamma_i(s,t), \Gamma_i(s',t')$ are sufficiently close that these disks overlap and we have double points of $\Gamma_i$ which converge to $(s,t),(s',t')$ as claimed. 

Next, we can take limits of subsequences of pairs of disks $D^i_1,D^i_2$  in the pre-image of $\Gamma_i$ corresponding to an isolated double point or double interval of $\gamma_i$ to get pairs of disks $\hat D_1, \hat D_2$ corresponding to double points or double intervals for $\hat \gamma$ in the pre-image of $\hat \Gamma$, establishing that $\hat \Gamma$ also satisfies one of the requirements for the crossing condition. This follows using Hausdorff convergence of subsets of the plane, see \cite{munkres}. However although the complements of the disks $\hat D_i$ are contractible, this does not follow necessarily for their limits $\hat D_1, \hat D_2$ - see Fig. \ref{fig:0123}b. To fix this issue, we will need to modify $\hat \gamma$ and this will be done later in the proof. 


\begin{figure} [h!]
\centering
\includegraphics[width=1\textwidth]{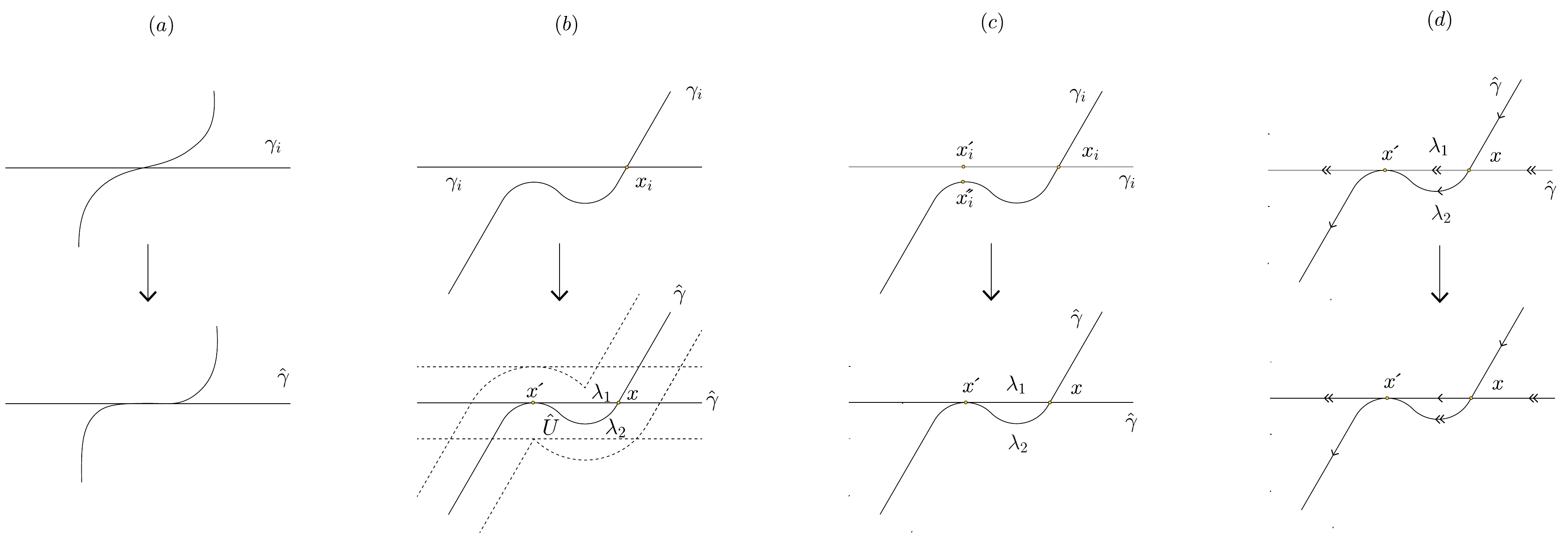}
\caption{(a) the limit of a sequence of double points for the $\gamma_i$ can lie in a double interval of $\hat \gamma$. (b) two ``close'' double points $x,x´$ for $\hat \gamma$. (c) the tangents to $\lambda_1, \lambda_2$ may agree at $x'$. (d) a switch between the arcs $\lambda_1$ and $\lambda_2$.}
\label{fig:0123}
\end{figure}

To continue the proof of the crossing condition, we need to show the double point set $\mathbb S$ of $\hat \gamma$ inside $\hat U=\hat \Gamma(\hat D_1)=\hat \Gamma(\hat D_2)$ is a single point or interval and also verify that $\hat D_i \cap (S^1 \times \{0\})$ is connected for $i=1,2$.  For the former, we claim that the double points of $\hat \gamma$ inside $\hat U$ are either isolated double points or double intervals. We can suppose by passing to a subsequence that the corresponding double points for $\gamma_i$ in $U_i=\Gamma_i(D^i_1)=\Gamma_i(D^i_2)$ are a convergent set of isolated double points or double intervals. We claim that $\mathbb S$ is either a double point or double interval which contains the limit of the sets of double points for the $\gamma_i$ in $U_i$. (Note that the limit of a sequence of double points for the $\gamma_i$ can lie in a double interval of $\hat \gamma$ - see Fig. \ref{fig:0123}a). So the problem is to show that $\mathbb S \cap \hat U$ is connected - it is clearly closed and contains the limit of the sets of double points for the $\gamma_i$ in $\hat U$. 
 
Suppose that $\mathbb S$ is not connected. Since $\mathbb S$ is closed, its complement in $\hat \gamma \cap \hat U$ is open. Then we can choose two arcs $\lambda_1, \lambda_2$ of $\hat \gamma$ with common end points $x,x'$, where $\partial \lambda_1 = \partial \lambda_2 = \{x,x'\}$ are in different components of $\mathbb S$ and int $\gamma_i$ is disjoint from $\mathbb S$ for $i=1,2$. We can also assume without loss of generality that the double point component of $\hat \gamma$ containing $x$, is part of the limit of the double sets for $\gamma_i$. We will prove that this contradicts the assumption that $\hat \gamma$ achieves the minimum of length amongst representatives of $\gamma$ if there is a non-zero angle between $\lambda_1, \lambda_2$ at $x$. See Fig. \ref{fig:0123}b.
 
 We can choose two subarcs of $\gamma_i$ starting at the sequence of double points $x_i$  which converge to $x$ as $i \to \infty$, with the other ends $x'_i, x''_i$ both converging to $x'$. These arcs converge to the two branches $\lambda_1, \lambda_2$ of $\hat \gamma$. Moreover the tangents to $\lambda_1, \lambda_2$ agree at $x'$, since clearly this is a non-crossing double point of $\hat \gamma$. See Fig. \ref{fig:0123}c. The key idea is to use the Meeks-Yau exchange and round-off trick (\cite{meeks}) to get a contradiction to the assumption that the length of $\hat \gamma$ is the infimum of lengths in the class of $\gamma$. This will not only eliminate the extra self-intersection of $\hat \gamma$ but also the problem of the complement of the disks $\hat D_i$ failing to be contractible. 

 We modify $\hat \gamma$ in two stages. See Fig. \ref{fig:0123}d. Firstly we switch the arcs $\lambda_1$ and $\lambda_2$. Note that the new loop is smooth at $x'$ but not at $x$.  Next we need to `smooth the corners' at $x$ created by this exchange. The idea is to use two unit radius circles tangent to two branches of $\hat \gamma$ at $x$ as in Fig. \ref{fig:45} top.  We then replace parts of these branches with an arc $\mu, \nu$ of each circle to produce a new $C^1$ path, which is the required further modification of $\hat \gamma$. Since radial projection onto $\mu, \nu$ reduces length, replacement of segments of the two branches of $\hat \gamma$ by $\mu, \nu$ decreases the length of $\hat \gamma$. Let us denote the further modified loop by $\gamma^*$.
 

\begin{figure} [h!]
\centering
\includegraphics[width=.6\textwidth]{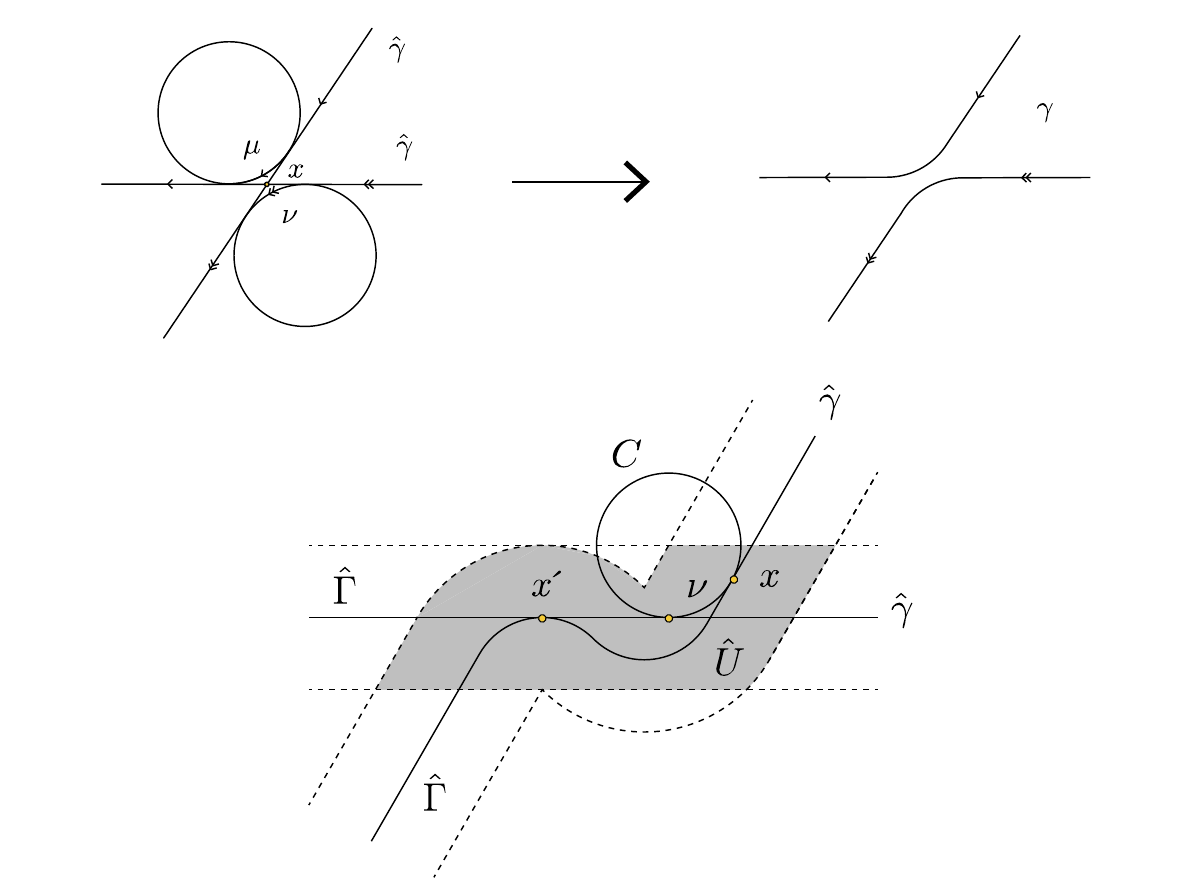}
\caption{Top: Smoothing the corners at $x$. Bottom: An example where the centres of the unit circles are where the boundary of the ribbon $\hat \Gamma$ crosses itself.}
\label{fig:45}
\end{figure}

Next we claim that no new double points are introduced in $\gamma^*$ as compared with $\hat \gamma$. It suffices to show that there are no intersections of $\hat \gamma$ and $\mu$, except at the endpoints of $\mu$. If we can show that $\mu, \nu$ are contained in $\hat U$ then this follows. But the centres of the unit radius circles are where the boundary of the ribbon $\hat \Gamma$ crosses itself - see Fig. \ref{fig:45} bottom. Hence it is easy to see that as well as $\mu, \nu \subset \hat U$, the ribbon $\Gamma^*$ for $\gamma^*$ is contained in $\hat \Gamma$. Moreover, the modified loop $\gamma^*$ clearly satisfies that the modified disks $D^*_1, D^*_2$ have contractible complements - see Fig. \ref{fig:6}. 


\begin{figure} [h!]
\centering
\includegraphics[width=1\textwidth]{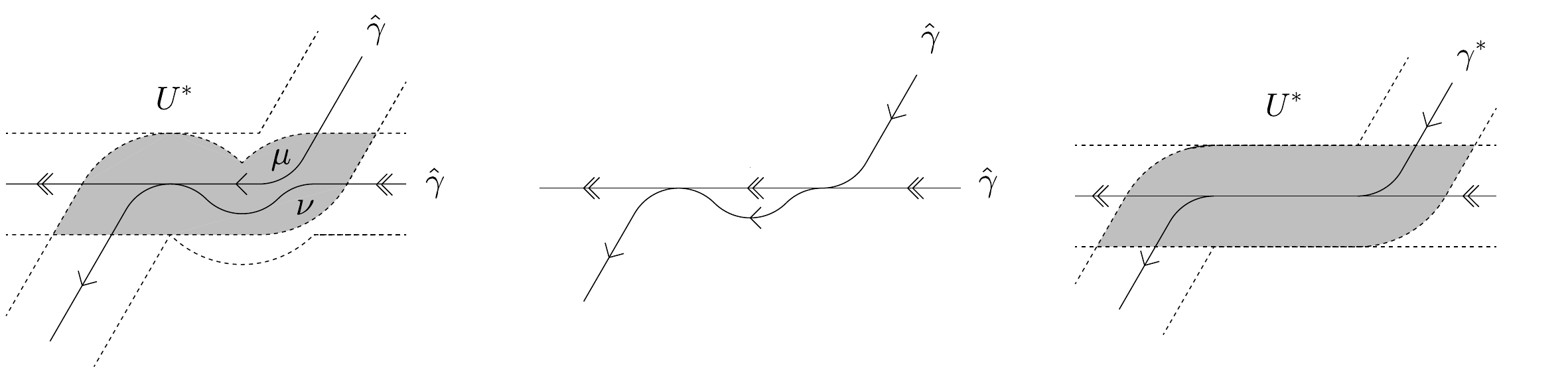}
\caption{A modification of $\hat \gamma$ such that the resultant loop $\gamma^*$ has disks $D^*_1, D^*_2$ of contractible complements.}
\label{fig:6}
\end{figure}

For the final crossing condition for $\gamma^*$, we need to show that $D^*_j \cap (S^1 \times \{0\})$ is connected for $j=1,2$. We know that $D^i_j \cap (S^1 \times \{0\})$ is connected for $j=1,2$ and for all $i$. Moreover $\hat D_j$ is the limit of $D^i_j$ as $i \to \infty$ and $\hat \gamma$ is the limit of $\gamma_i$ as $i \to \infty$. It is easy to see that the modification of $\hat \gamma$ to $\gamma^*$ gives  $\hat D_j \cap (S^1 \times \{0\}) = D^*_j \cap (S^1 \times \{0\})$ and so we see immediately that this is also connected. 
So this completes the proof that $\hat \gamma$ or $\gamma^*$ satisfies this crossing condition.

Hence we have shown that the assumptions that the crossing condition is a single double point or interval for $\hat \gamma$ in $\hat U$ is violated and the angle at $x$ between the branches of $\hat \gamma$ is non-zero lead to a contradiction. 

It remains to consider the case that the angle at $x$ between the branches of $\hat \gamma$ is zero. In this case, suppose without loss of generality that $\|\lambda_1|\le \|\lambda_2\|$. We modify $\hat \gamma$ by replacing $\lambda_2$ with a copy of $\lambda_1$. Hence for the modified loop, which we denote again by $\gamma^*$, the double set now contains $\lambda_1$ and the length is at most that of $\hat \gamma$. We can continue doing this if there are further such pairs of branches of $\gamma^*$ with ends which are tangential, until no such branches occur. This clearly gives a new loop in the same class as $\gamma$ with minimum length. Moreover for this modified loop, we see that $\hat D_j \cap (S^1 \times \{0\}) = D^*_j \cap (S^1 \times \{0\})$ is connected and the complement of $D_j$ is contractible - see Fig. \ref{fig:6}.

So this completes the proof of all parts of the crossing condition for $\gamma^*$ and the argument also shows that it is in the same diagram class as $\gamma$.
 
\end{proof}

\begin{definition} The ribbonlength $\mbox{Rib}(\gamma)$ of a ribbon knot or link $\gamma$ is the ratio of the length of the core $\gamma$ to the width of the ribbon $\Gamma$,
$$\mbox{Rib}(\gamma)= \frac{\mbox{Length}(\gamma)}{\mbox{Width}(\Gamma)}= \frac{\mbox{Length}(\gamma)}{2}$$
\end{definition}

\section{Length minimisers in disk space}

\begin{theorem}\label{ribcs} \begin{enumerate}
\item Length minimisers in disk space exist and are $cs$. 
\item Suppose that a length minimiser of a ribbon knot or link in disk space is ribbon. Then this disk space minimiser is also a length minimiser in ribbon space. 
\end{enumerate}
\end{theorem}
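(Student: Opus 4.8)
The plan is to treat the two parts separately: part (1) is the analytic core, and part (2) is a short formal consequence of the inclusion of ribbon space into disk space. For the existence in part (1), fix a connected component of $\mathcal D$ and a sequence $\gamma_i$ in it whose lengths converge to the infimum of lengths in the component. By Observation \ref{obs}, the weak separation condition forces each $\gamma_i$ to be a $C^{1,1}$ curve with curvature at most $1$ where it is twice differentiable; reparametrising proportionally to arc length and noting that the lengths are uniformly bounded above (the sequence is minimising) and below (the combinatorial type forces a definite number of complementary regions, each enclosing a unit disk), the derivatives $\gamma_i'$ are uniformly Lipschitz. The Arzela-Ascoli theorem then gives, after passing to a subsequence, a $C^1$ (in fact $C^{1,1}$) limit $\hat\gamma$. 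One checks $\hat\gamma$ lies in the same component of $\mathcal D$: the weak separation condition passes to the limit by the circle-convergence argument already used in the proof of Theorem \ref{minimalribbon} (take unit circles touching $\gamma_i$ at points converging to a prescribed point of $\hat\gamma$ from a fixed side, and extract a convergent subsequence of circles); the inscribed unit disks of the complementary regions persist in the Hausdorff limit, so every complementary region of $\hat\gamma$ again contains a unit disk and none of them collapse; and the double point condition together with membership in the \emph{same} component holds because, as observed after Definition \ref{diskspace}, regions carrying a unit disk cannot be created or destroyed in the limit while non-crossing double points can only appear or disappear without altering the combinatorial type. Hence $\hat\gamma$ realises the infimum of length in its component.

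For the regularity assertion in part (1), let $\hat\gamma$ be such a minimiser and analyse its first variation of length under compactly supported normal perturbations that keep it in $\mathcal D$. At a point $p$ where neither the curvature bound $\kappa \le 1$ is active nor $\hat\gamma$ is in contact with an inscribed unit disk of an adjacent complementary region, small perturbations in either normal direction remain admissible, so the first variation of length vanishes there and the corresponding sub-arc is a geodesic of the plane, i.e.\ a straight segment. On the closed set where $\kappa = 1$, the curve is locally a unit circular arc. At a point $p$ in contact with an inscribed unit disk $B$, inward perturbations are obstructed by $B$ while outward ones are free, and the Meeks-Yau exchange and round-off trick used in Theorem \ref{minimalribbon} (replacing a sub-arc by an arc of $\partial B$ strictly shortens it unless the sub-arc already lies on $\partial B$) shows $\hat\gamma$ must follow $\partial B$ near $p$, again a unit circular arc. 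Thus $\hat\gamma$ is locally everywhere a segment or a unit arc. Finally one rules out accumulation of the transition points between consecutive pieces: between two transitions the type is constant, and a $C^1$ junction of a segment with a unit arc, or of two unit arcs, forces a definite amount of length or of total turning before the next transition (again using $\kappa \le 1$ and that the complementary disks have radius $1$), so transitions cannot accumulate on the compact $S^1$; together with the finiteness of complementary regions from Corollary \ref{finitepartition} this bounds the complexity and shows $\hat\gamma$ is $cs$.

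For part (2), recall that every ribbon knot or link is an element of $\mathcal D$, and that all ribbon representatives of a fixed ribbon knot or link $\gamma$ share the same combinatorial type of complementary regions, hence lie in a single component $\mathcal C$ of $\mathcal D$; conversely, any element of $\mathcal C$ that is ribbon is again a ribbon representative of $\gamma$, since for a ribbon element the complementary-region data of $\mathcal C$ records which double points are crossings and which are the non-crossing double points one splits along. Therefore the infimum of length over ribbon representatives of $\gamma$ is at least the infimum of length over $\mathcal C$. If a length minimiser $\delta_\ast$ of $\mathcal C$, whose existence is part (1), happens to be ribbon, then $\delta_\ast$ is a ribbon representative of $\gamma$ whose length equals the infimum over $\mathcal C$, hence is at most the infimum over ribbon representatives; so the two infima coincide and $\delta_\ast$ attains the ribbon one. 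Since ribbonlength is $\mathrm{Length}/2$, $\delta_\ast$ then also minimises ribbonlength in the class of $\gamma$.

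The delicate step I expect to be the main obstacle is the regularity-and-finiteness argument in part (1): one must set up the admissible variations so that the non-local, obstacle-type constraint ``each complementary region contains a unit disk'' yields the local conclusion that contact arcs are unit circles, and then exclude accumulation of the transition points on $S^1$. The existence half is a routine adaptation of the Arzela-Ascoli argument already carried out for Theorem \ref{minimalribbon}, and is in fact easier here because the crossing condition need not be preserved in $\mathcal D$; likewise part (2) is purely formal once the combinatorial bookkeeping relating $\mathcal C$ to the ribbon representatives of $\gamma$ is made precise.
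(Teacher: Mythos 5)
Your existence argument for minimisers in a component of $\mathcal D$ and your part (2) are essentially the paper's own: Arzela--Ascoli via the $C^{1,1}$ bound from the weak separation condition, persistence of the inscribed unit disks and of the combinatorial type under limits, and then the formal comparison of infima using the inclusion of ribbon space into disk space. The genuine divergence, and the genuine gap, is in the $cs$ regularity step. Your trichotomy hinges on the assertion that ``on the closed set where $\kappa=1$ the curve is locally a unit circular arc,'' which is unsupported: a minimiser is a priori only $C^{1,1}$, its curvature is defined almost everywhere, and a curve with $|\kappa|=1$ a.e.\ need not be locally a single arc (it could, for instance, alternate left- and right-turning unit arcs at arbitrarily small scales away from any disk contact). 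One-sided first-variation information at points where the curvature constraint is active does not by itself exclude such behaviour. This is exactly what the paper's key step supplies: any sufficiently short subarc that is not already $cs$ is replaced by a strictly shorter Dubins ({\sc csc}) path with the same endpoints and end directions, chosen short relative to its distance from the inscribed unit disks (or with interior kept disjoint from them when the ends are tangency points), which contradicts minimality; your proposal never produces this comparison curve. Relatedly, your contact-set argument via ``project onto $\partial B$'' creates corners at the entry and exit points, so it too needs the tangent unit-arc (Dubins-type) rounding to yield an admissible shorter competitor; the Meeks--Yau exchange in Theorem \ref{minimalribbon} is used there for a different purpose (removing an extra crossing) and does not substitute for this.

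The second gap is the finiteness of the decomposition. Your claim that a $C^1$ junction of a segment with a unit arc, or of two unit arcs, ``forces a definite amount of length or of total turning before the next transition'' is false for $cs$ curves in general: two nearly parallel segments can be joined by an arbitrarily short unit arc, so transition points are not separated by any uniform amount of length or turning, and this argument cannot bound the number of pieces. The paper's finiteness argument is of a different nature: a minimiser is a concatenation of Dubins solutions meeting the boundaries of the inscribed unit disks tangentially; if there were infinitely many pieces their endpoints would accumulate on one of these unit circles, and two sufficiently close tangential endpoints can only be joined by an embedded Dubins curve that is an arc of that same circle, so the pieces merge smoothly into finitely many arcs unless infinitely many of them self-intersect, which would violate the double point condition. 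You would need to either import that argument or find a genuine replacement; the length-or-turning bound you propose does not exist.
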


\begin{proof}
By Proposition \ref{disk}, Observation \ref{link} and Theorem \ref{doublepoint} all the ribbon loop diagrams or link diagrams in a given class are contained in a component of disk space, which we will denote by $\mathcal D_\gamma$. Here we abuse notation by using $\gamma$ as both a single realisation of a given ribbon loop or link diagram or the class of such realisations. 

Firstly, we need to prove that $\mathcal D_\gamma$ has length minimisers. Later on we will show that such minimisers are of $cs$ type. Firstly in disk space, loops satisfy the weak separation bound, so by Observation \ref{obs} and the Arzela-Ascoli theorem, we can find a $C^1$ limit $\hat \gamma$ of a sequence of loops or links $\gamma_i$ in  $\mathcal D_\gamma$ whose lengths converge to the infimum of length. To show that $\hat \gamma$ is in  $\mathcal D_\gamma$, we need to first establish that $\hat \gamma$ satisfies the weak separation condition. This follows by the approach in Theorem \ref{minimalribbon}. In particular at a sequence of points $x_i$ on $\gamma_i$ converging to $x$ on $\hat \gamma$, there are two circles of radius one which touch $\gamma_i$ at $x_i$ on either side. These circles converge to a similar pair of circles touching $\hat \gamma$ at $x$ on each side, establishing the weak separation condition. 

  Note that the double points of $\hat \gamma$ don't necessarily exactly match those of $\gamma_i$ under taking limits. There are two ambiguities in the matchings. The first is that isolated double points may switch with double intervals. The second is that new non-crossing double points or double intervals may appear in the limit $\hat \gamma$. However it is still true that the complementary regions of $\hat \gamma$ match those of members of $\gamma_i$ under limits and there are still unit radius disks in these regions for $\hat \gamma$, by taking limits of such disks in the corresponding regions of $\gamma_i$.  (Recall that non-crossing double points and double intervals do not separate complementary regions, so having new such points and intervals in $\hat \gamma$ does not change the class of loop or link). Consequently, this completes the proof that minimisers of length exist in $\mathcal D_\gamma$.

Clearly it suffices to show that $C^1$ minimisers of length in $\mathcal D_\gamma$ are always of $cs$ type, to complete the first part of the Theorem. 


\begin{figure} [h!]
\centering
\includegraphics[width=1\textwidth]{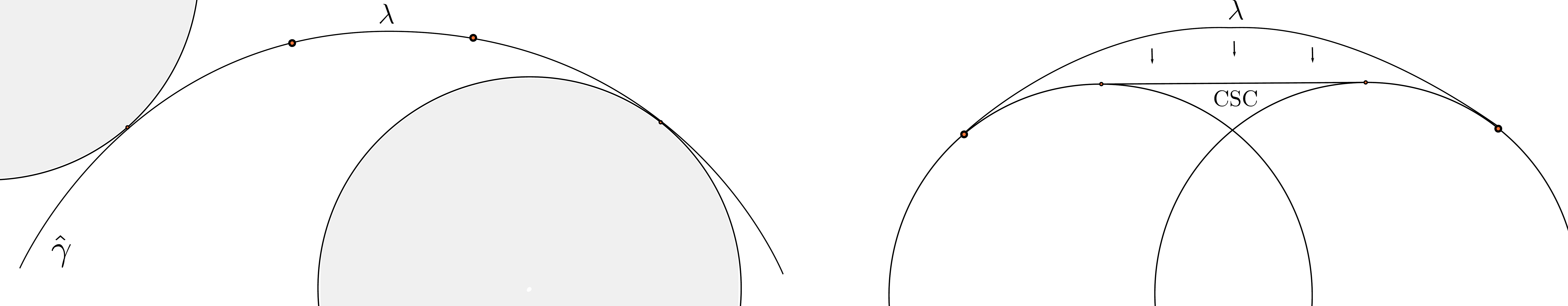}
\caption{Left: The subarc $\lambda$ does not touch the unit radius disks (in grey) in the complementary regions for $\hat \gamma$ on either side of $\lambda$. Right: We shorten $\lambda$ by replacing it by a Dubins solution of type {\sc{csc}} (circle-line segment-circle). }
\label{fig:7}
\end{figure}

The argument that a $C^1$ length minimiser $\hat \gamma$ of $\mathcal D_\gamma$ is of $cs$ type is by contradiction. We can divide $\hat \gamma$ into a finite collection of short subarcs with the following properties. Suppose first one of the short subarcs $\lambda$  of $\hat \gamma$ does not touch the unit radius disks in the complementary regions for $\hat \gamma$ on either side of $\lambda$. We can then shorten $\lambda$ by replacing it by a Dubins solution (see \cite{ayala} and Fig. \ref{fig:7}) with the same endpoints and end directions as $\lambda$ if the latter is not of $cs$ type. We can choose the subarcs so that any such a $\lambda$ is short enough relative to its distance from the adjacent unit radius disks, so that the Dubins solution will also miss these disks. 

We also need to consider subarcs $\lambda$ of $\hat \gamma$ with one or both ends on a unit radius disk in a complementary region, so that the interior of $\lambda$ is disjoint from any of these disks. Again if $\lambda$ is short enough and not of $cs$ type, then we can replace it by a Dubins solution with the same endpoints and end directions as $\lambda$ so that the new arc has still has interior disjoint from these disks. See Fig. \ref{fig:7} and \cite{ayala}. Note that the unit disks at the ends of $\lambda$ are tangential to $\lambda$.

Hence we have found a shorter loop or link in  $\mathcal D_\gamma$, contradicting our choice of $\hat \gamma$ as shortest. 

Next, by the above argument it follows that $\hat \gamma$ is a collection of Dubins solutions possibly connecting points on the boundaries of the unit disks in the complementary regions. Moreover at these unit disk boundary points, $\hat \gamma$ must be tangent to the unit circles. To conclude that $\hat \gamma$ is $cs$,  it remains to verify it is a finite concatenation of Dubins curves together with arcs of unit disks joined smoothly together. If there were infinitely many Dubins curves, then these would end at infinitely many points on the boundary of the unit disks. Such points would accumulate. But if two such points are very close, it is easy to see \cite{ayala} that any embedded Dubins curve joining them is the arc of the unit circle. Hence at an accumulation point, we would have such arcs joining up smoothly and hence do get a finite number of Dubins curves as required, unless infinitely many of these arcs had self intersections. But in the latter case, $\hat \gamma$ would have infinitely many self-crossing points, which is a contradiction.  So this establishes a ribbon length minimiser is of $cs$ type and is in $\mathcal D_\gamma$. 

By assumption, if the ribbon conditions of separation and crossing are satisfied by this disk length minimiser, then this provides a ribbon length minimiser, since disk space contains ribbon space. This completes the proof.  
\end{proof}


\section{Variational approach}

In this section we study critical points of length for elements in disk space $\mathcal D$. Then in Section \ref{ribexamples} we will show how the ribbonlength varies as the knot diagram changes, so long as the length minimiser in disk space is a ribbon loop. 

A useful physical model for studying critical points of length for loops in $\mathcal D$ is a finite collection of $n$ unit radius disks (each of these corresponds to a bounded complementary region of the loop) and an elastic band being the loop itself. For each of the $n$ disks, there are two degrees of freedom, which are the coordinates of its centre. Accordingly, the configuration space is a subset of $\mathbb{R}^{2n}$. In addition, there is a distance constraint, that the centres of a pair of unit disks must be separated by a distance of at least 2 (see also Proposition \ref{disk}). Our approach is based on the following proposition from the work of Kirszenblat et. al. on minimal curvature-constrained networks \cite{kirszenblat} inspired by the work of Rubinstein and Thomas on Steiner trees \cite{rubthom}.

\begin{figure} [h!]
\centering
\includegraphics[width=1\textwidth]{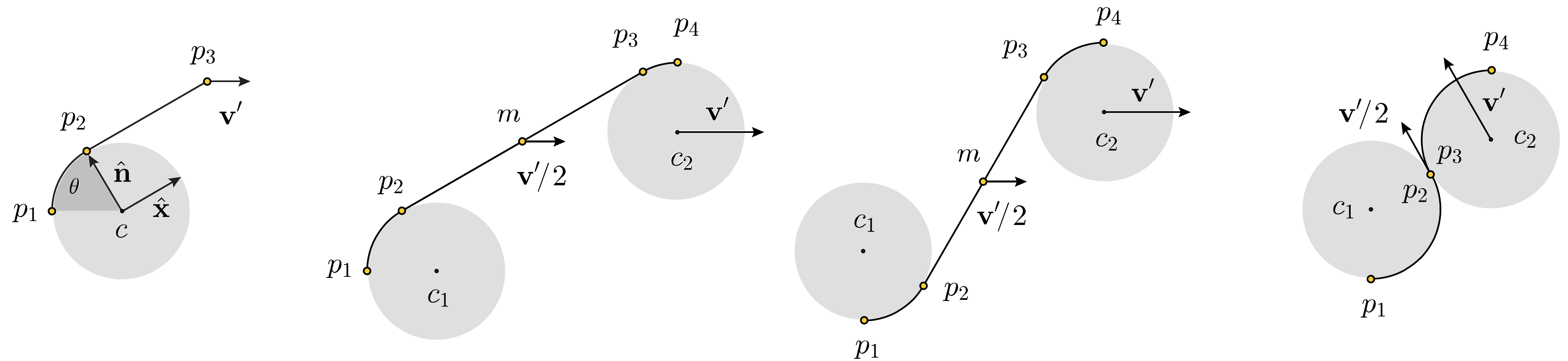}
\caption{Left: an illustration of Proposition \ref{proposition41}. The other illustrations are for Proposition \ref{proposition42}.}
\label{fig:Variation}
\end{figure}

\begin{proposition}
	Suppose that $p_1 p_2 p_3$ is a smooth curve consisting of a concatenation of an arc of a unit circle and a line segment with common point $p_2$ such that  $p_1$ is held fixed while $p_3$ is varied, as shown in Fig. \ref{fig:Variation} left. Then the first variation of length of $p_1 p_2 p_3$ is the inner product between the direction of variation and the unit vector from $p_2$ to $ p_3$.
\label{proposition41}
\end{proposition}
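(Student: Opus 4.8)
The plan is to parametrise explicitly the family of admissible configurations and differentiate the length directly; the only real work is to pin down what ``$p_1$ held fixed'' means. I would fix the centre $O$ of the unit circle carrying the arc: since $p_1$ is held fixed and lies on this circle (which in the physical model of the preceding paragraph is one of the fixed unit disks), both $O$ and $p_1$ are constant throughout the variation. Write the junction point as $p_2(\phi) = O + (\cos\phi,\sin\phi)$, with $\phi = \phi_1$ the fixed value for which $p_2 = p_1$, and let $e(\phi) = (-\sin\phi,\cos\phi)$ be the unit tangent to the circle at $p_2(\phi)$ in the direction of travel along the arc (the opposite orientation is symmetric). Smoothness at $p_2$ forces the segment to leave $p_2$ along $e(\phi)$, so $p_3 = p_3(\phi,\ell) = p_2(\phi) + \ell\, e(\phi)$ for some $\ell \ge 0$; note that $e(\phi)$ is exactly the unit vector from $p_2$ to $p_3$ appearing in the statement. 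Orienting so that the arc length from $p_1$ to $p_2$ is $\phi - \phi_1 \ge 0$, the total length is
\[
 L(\phi,\ell) = (\phi - \phi_1) + \ell,
\]
where the coefficient $1$ in front of $\phi-\phi_1$ is precisely the statement that the circle has radius $1$.

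For the computation, use $p_2'(\phi) = e(\phi)$ and $e'(\phi) = -(p_2(\phi)-O) =: -n(\phi)$, with $n(\phi)$ the unit inward normal, to get
\[
 \frac{\partial p_3}{\partial\phi} = e(\phi) - \ell\, n(\phi), \qquad \frac{\partial p_3}{\partial\ell} = e(\phi),
\]
so that $dp_3 = (e - \ell n)\,d\phi + e\,d\ell$. Taking the inner product with $e$ and using $e\cdot n = 0$ and $e\cdot e = 1$ yields $e\cdot dp_3 = d\phi + d\ell = dL$. When $\ell > 0$ the vectors $e - \ell n$ and $e$ are linearly independent, so $dp_3$ sweeps out all of $\mathbb{R}^2$ as $(d\phi,d\ell)$ varies; hence for an arbitrary motion $p_3 = p_3(t)$ of the endpoint we obtain $\tfrac{d}{dt}L = e \cdot \dot p_3(t)$, which is the assertion. (The degenerate cases $\ell = 0$ and $\phi = \phi_1$ are immediate and consistent with the formula.)

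A conceptual reformulation worth recording is an envelope-theorem argument: with $p_1$ and $p_3$ both fixed, the value of $\phi$ making $p_1p_2p_3$ a $C^1$ curve is a critical point of $\phi \mapsto (\phi - \phi_1) + \|p_3 - p_2(\phi)\|$ on the circle, because $\tfrac{d}{d\phi}(\phi-\phi_1) = 1$ while $\tfrac{d}{d\phi}\|p_3 - p_2(\phi)\| = \tfrac{(p_2 - p_3)\cdot e}{\|p_2-p_3\|} = -1$ (using $p_2 - p_3 = -\ell e$); indeed this stationarity is equivalent to $p_3$ lying on the tangent ray at $p_2$, i.e.\ to the $C^1$ condition. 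Consequently the total derivative of $L$ with respect to $p_3$ equals its partial derivative at frozen $p_2$, namely $\nabla_{p_3}\|p_3 - p_2\| = (p_3 - p_2)/\|p_3-p_2\| = e$, giving the same conclusion.

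The step needing care — and the one I would flag — is the interpretation of ``$p_1$ held fixed.'' It must mean that the supporting circle of the arc is pinned (consistent with the physical model in which the unit disks are the slow variables), not merely that the point $p_1$ is fixed: if the unit circle were free to roll about $p_1$ there would be an extra degree of freedom and the identity would fail, picking up a term proportional to $1 - \cos\theta$ with $\theta$ the arc angle. Everything else is a short direct calculation; with the circle fixed, the configurations reaching a prescribed $p_3$ form a rigid family and the argument above applies verbatim.
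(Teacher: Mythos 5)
Your proof is correct and is essentially the paper's argument written out in explicit coordinates: your identity $e\cdot dp_3 = d\phi + d\ell = dL$ is exactly the cancellation the paper obtains by computing $\ell' = \theta' + x' = \hat{\mathbf{n}}'\cdot\hat{\mathbf{x}} + (\mathbf{v}-\hat{\mathbf{n}}')\cdot\hat{\mathbf{x}} = \mathbf{v}\cdot\hat{\mathbf{x}}$, with unit radius and tangency at $p_2$ playing the same roles in both versions. Your reading of ``$p_1$ held fixed'' as the supporting unit circle being pinned is also the paper's intent (its centre $c$ is taken as the origin, and in the physical model the disk centres are the configuration variables), so the interpretive point you flag is consistent with the paper rather than a discrepancy.
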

	\begin{proof} Suppose that $p_3$ moves along any smooth curve with derivative at its initial position being the vector $\mbox{\bf v}$. Let $\ell$ denote the length of $p_1 p_2 p_3$. The curve $p_1 p_2$, of length $\theta$, is an arc of a unit circle whose centre $c$ is considered the origin. 
	
	Let $\hat{\mbox{\bf n}}$ be the unit vector from the origin to the point $p_2$ and ${\mbox{\bf x}}$
the vector from $p_2$ to $p_3$, with length denoted by $x$ and unit vector in the direction of ${\mbox{\bf x}}$ denoted by $\hat{\mbox{\bf x}}$. Observe that the first variation of the vector ${\mbox{\bf x}}$ is equal to the first variation of its head $\mbox{\bf v}$ minus the first variation of its tail $\hat{\mbox{\bf n}}$. If the point $p_3$ is perturbed in the direction of $\mbox{\bf v}$, then the first variation of length of $p_1 p_2 p_3$ is
		\begin{align*}
		\ell' &= \theta' + x'\\
		&=  \hat{\mbox{\bf n}}' \cdot \hat{\mbox{\bf x}} + {\mbox{\bf x}}' \cdot \hat{\mbox{\bf x}}\\
		&=  \hat{\mbox{\bf n}}' \cdot \hat{\mbox{\bf x}} + (\mbox{\bf v}-\hat{\mbox{\bf n}}' ) \cdot \hat{\mbox{\bf x}} \\
                  &= \mbox{\bf v} \cdot \hat{\mbox{\bf x}}
		\end{align*} 
\end{proof}
As a consequence of Proposition \ref{proposition41}, we obtain the following proposition.
\begin{proposition}
Suppose that $p_1p_2p_3p_4$ is a smooth curve consisting of a concatenation of an arc of a unit radius circle with centre $c_1$ (fixed), followed by a line segment, followed by an arc of a unit radius circle with centre $c_2$ (variable), as shown in Fig. \ref{fig:Variation}. Then the first variation of length of $p_1p_2p_3p_4$ is the scalar product between the vector of variation and the unit vector from $p_2$ to $ p_3$.
\label{proposition42}
\end{proposition}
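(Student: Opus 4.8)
The plan is to derive the formula from two applications of Proposition~\ref{proposition41}: one to the front part $p_1p_2p_3$ of the curve, and one to the back part $p_2p_3p_4$ read backwards and viewed in a frame translating with $c_2$. After adding the two resulting identities almost everything cancels, leaving exactly the asserted inner product. Write $\mathbf v=\dot c_2$ for the vector of variation, $\hat{\mathbf x}$ for the unit vector from $p_2$ to $p_3$, $x=\|p_3-p_2\|$ for the length of the middle segment, and $\theta_1,\theta_2$ for the angles subtended at $c_1,c_2$ by the arcs $p_1p_2$ and $p_3p_4$, so that $\mathrm{Length}(p_1p_2p_3p_4)=\theta_1+x+\theta_2$. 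I use the convention --- implicit in the statement and in Fig.~\ref{fig:Variation} --- that perturbing $c_2$ carries the second unit circle, and with it the endpoint $p_4$, rigidly, so that $p_4-c_2$ is constant along the variation; the tangency conditions at $p_2$ and $p_3$ that keep the curve $C^1$ then determine $p_2,p_3$ (hence $\dot p_2,\dot p_3$) as functions of the perturbation, with $p_1$ and $c_1$ held fixed.

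First I would apply Proposition~\ref{proposition41} to $p_1p_2p_3$ (fixed endpoint $p_1$, varied endpoint $p_3$), which gives
\[
\frac{d}{ds}\bigl(\theta_1+x\bigr)=\dot p_3\cdot\hat{\mathbf x}.
\]
Next I would translate the whole picture by $-c_2(s)$; since this is translation by a (possibly $s$-dependent) vector, it leaves the length of every subarc unchanged. In the translated picture the curve $(p_4-c_2)(p_3-c_2)(p_2-c_2)$ is an arc of the unit circle centred at the fixed origin followed by a line segment; its first endpoint $p_4-c_2$ is fixed by our convention, its last endpoint $p_2-c_2$ moves with velocity $\dot p_2-\mathbf v$, and the unit vector from the arc--segment junction $p_3-c_2$ to that last endpoint is $\widehat{(p_2-c_2)-(p_3-c_2)}=\widehat{p_2-p_3}=-\hat{\mathbf x}$. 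Hence Proposition~\ref{proposition41} applies and gives
\[
\frac{d}{ds}\bigl(x+\theta_2\bigr)=-\bigl(\dot p_2-\mathbf v\bigr)\cdot\hat{\mathbf x}.
\]

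Finally I would combine the two identities. From $x=\|p_3-p_2\|$ one has $\dot x=\hat{\mathbf x}\cdot(\dot p_3-\dot p_2)$ directly, and since $\mathrm{Length}(p_1p_2p_3p_4)=(\theta_1+x)+(x+\theta_2)-x$, adding the two displays and subtracting $\dot x$ yields
\[
\frac{d}{ds}\mathrm{Length}(p_1p_2p_3p_4)=\dot p_3\cdot\hat{\mathbf x}-\bigl(\dot p_2-\mathbf v\bigr)\cdot\hat{\mathbf x}-\hat{\mathbf x}\cdot(\dot p_3-\dot p_2)=\mathbf v\cdot\hat{\mathbf x},
\]
which is the claim. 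The step I expect to need the most care is the second application of Proposition~\ref{proposition41}: one must check that passing to the frame co-moving with $c_2$ is legitimate (length is unaffected, and the translated configuration genuinely satisfies the hypotheses of Proposition~\ref{proposition41}, with a fixed arc-centre at the origin and a fixed endpoint $p_4-c_2$), and one must keep straight which endpoint is held fixed and the orientation of the relevant unit vector in each of the two applications --- reversing either flips a sign and spoils the cancellation. It is also worth stating the rigid-motion convention for $p_4$ explicitly, since otherwise the variation of the arc $p_3p_4$ is not pinned down.
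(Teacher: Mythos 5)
Your argument is correct, and it rests on the same two ingredients as the paper's proof: Proposition \ref{proposition41} applied twice, once in the fixed frame and once after passing to the frame translating with $c_2$ (your rigid-motion convention for $p_4$ is exactly the paper's, which moves $p_4$ by a parallel translation of the motion of $c_2$). The difference is the decomposition. The paper splits the curve at the midpoint $m$ of the segment $p_2p_3$, observes that the first variation of $m$ is $\mathbf{v}/2$, and obtains $\frac{1}{2}\mathbf{v}\cdot\hat{\mathbf{x}}$ from each half; you instead apply the lemma to the two overlapping pieces $p_1p_2p_3$ and the reversed, translated $p_4p_3p_2$, and remove the double-counted segment by subtracting $\dot{x}=\hat{\mathbf{x}}\cdot(\dot{p}_3-\dot{p}_2)$, after which the unknown velocities $\dot{p}_2,\dot{p}_3$ cancel identically. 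What your bookkeeping buys is that you never have to identify the velocity of any interior point of the curve: the paper's midpoint observation is exact when the segment is the crossing common tangent (there $m$ is the midpoint of $c_1c_2$, as in the figure) or when $\mathbf{v}$ points along the line of centres, but for the external tangent $\dot{m}$ picks up an extra component along $\hat{\mathbf{x}}$ from the rotation of the tangent line, and it is only the cancellation between the two halves---the cancellation your version makes explicit---that keeps the final formula correct. What the paper's split buys is symmetry: each half contributes exactly half the answer with no correction term. One small point you share with the paper: when the segment is degenerate ($x=0$, tangent circles) the identity $\dot{x}=\hat{\mathbf{x}}\cdot(\dot{p}_3-\dot{p}_2)$ is not available as written, and, as in the paper, this case should be handled as a limiting case with $\hat{\mathbf{x}}$ taken to be the common tangent direction at the point $p_2=p_3$.
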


\begin{proof}
Suppose that the centre $c_2$ (respectively the point $p_4$) moves along any smooth curve with derivative at its initial position being the vector $\mbox{\bf v}$ (respectively a parallel translation of this curve with the derivative $\mbox{\bf v}$ at $p_4$). Let $\ell$ denote the length of the curve $p_1p_2p_3p_4$.  We may view the case where the line segment has zero length as a limiting case, see Fig. \ref{fig:Variation} right. In this case, the points $p_2$ and $p_3$ coincide and we define the unit vector $\hat{\mbox{\bf x}}$ to be tangent to the disks at the point $p_2$ and pointing in the direction of motion along the curve when traveling from the point $p_1$ toward the point $p_4$. Let $m$ denote the midpoint of the straight segment $p_2p_3$, which coincides with the points $p_2$ and $p_3$ if the straight segment is degenerate. Let $\ell_1$ and $\ell_2$ denote the respective lengths of the curves $p_1p_2m$ and $mp_3p_4$, both of which are curves as in Proposition \ref{proposition41}. Observe that the first variation of the midpoint $m$ is half that of the centre $c_2$. By proposition \ref{proposition41}, if the point $m$ is perturbed in the direction of $\mbox{\bf v}/2$, then the first variation of length of  $p_1p_2m$ is
\begin{eqnarray*}
\ell_1' = \frac{1}{2} \mbox{\bf v} \cdot \hat{\mbox{\bf x}}
\end{eqnarray*}
On the other hand, if we subtract $\mbox{\bf v}$ from all elements in Fig \ref{fig:Variation}, then we see that the first variation of length of $mp_3p_4$ is
\begin{eqnarray*}
\ell_2' &= -\frac{1}{2} \mbox{\bf v} \cdot - \hat{\mbox{\bf x}}\\
& = \frac{1}{2} \mbox{\bf v} \cdot \hat{\mbox{\bf x}}
\end{eqnarray*}
Summing these two terms, we obtain for the first variation of length of $p_1p_2p_3p_4$
\begin{equation*}
\ell' = \ell_1'+\ell_2' = \mbox{\bf v} \cdot \hat{\mbox{\bf x}}
\end{equation*}
\end{proof}

\begin{observation}We have established that minimal length elements in disk space $\mathcal D$ must be $cs$ disk diagrams, in Theorem \ref{ribcs} . To compute the first variation of the length of a $cs$ disk diagram (as the positions of the disks are varied) we first decompose the $cs$ disk diagram into curves, each of which is a concatenation of an arc of a unit radius circle, followed by a line segment, followed by an arc of a unit radius circle as in Proposition \ref{proposition42}. Then, we simply add the contributions coming from each piece in this decomposition. Concretely, suppose that the $i$th disk moves along a smooth curve $t \to {\bf c}(t) \in \mathbb{R}^2$ with velocity vector ${\bf v} = \frac{d{\bf c}}{dt}$ while the remaining disks are held fixed. Moreover, suppose that there are $2k$ (possibly degenerate) straight segments lying tangent to the $i$th disk with outward pointing unit vectors $\hat{\bf u}_1, \hat{\bf u}_2, \ldots, \hat{\bf u}_{2k}$, see Fig. \ref{fig:perturbation}. Let $\ell$ denote the length of the loop $\gamma$ and $(x_{2i - 1}, x_{2i})$ the centre of the $i$th disk. Then the directional derivative $D\ell({\bf v})$ of $\ell$ in the direction of ${\bf v}$ at the configuration point ${\bf x} = (x_1, x_2, \ldots, x_{2n})$ is given by
\begin{align*}
D\ell({\bf v}) &= \lim_{t \to 0^+}\ell(x_1, x_2, \ldots, x_{2i - 1} + c_1(t), x_{2i} + c_2(t), \ldots, x_{2n}) - \ell(x_1, x_2, \ldots, x_{2n})\\
& = (\hat{\bf u}_1+ \hat{\bf u}_2+ \ldots +\hat{\bf u}_{2k}) \cdot {\bf v}.
\end{align*}
\end{observation}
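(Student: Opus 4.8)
The statement essentially records a computation, so the plan is to make the decomposition precise and then invoke Proposition \ref{proposition42} piece by piece. First I would fix a $cs$ length minimiser $\gamma$ in $\mathcal D_\gamma$, which exists by Theorem \ref{ribcs}, and write its core as a cyclic concatenation of unit-radius circular arcs and (possibly degenerate) straight segments, where each segment is tangent at its endpoints to the unit disks adjacent to it. I would then single out the $i$th disk together with the $k$ maximal arcs $B_1,\dots,B_k$ of $\gamma$ lying on its boundary circle (a visit at which $\gamma$ only touches the disk at one point being a degenerate arc). Each $B_r$ is flanked by two straight segments tangent to the $i$th disk, which gives the $2k$ tangent segments $S_1,\dots,S_{2k}$ with outward unit vectors $\hat{\mathbf u}_1,\dots,\hat{\mathbf u}_{2k}$; every other arc of $\gamma$ lies on a circle that is held fixed when only the $i$th disk is moved.

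Next I would cut $\gamma$ at the midpoint of every circular arc. This splits $\gamma$ into finitely many sub-curves, each of the form (half of an arc) -- (straight segment) -- (half of an arc), i.e. of the type treated in Proposition \ref{proposition42}, the two halves of a common arc being shared by two consecutive sub-curves, so that each arc and each segment of $\gamma$ is covered exactly once. When the $i$th disk is translated with velocity $\mathbf v$ and all other disks are held fixed, a sub-curve meeting none of the $B_r$ is rigid and contributes $0$ to the first variation of length; a sub-curve containing half of some $B_r$ has one arc-end on the moving disk and the other on a fixed disk. Assigning the fixed disk the role of $c_1$ and the moving disk the role of $c_2$ in Proposition \ref{proposition42} (reversing the orientation of the sub-curve if necessary, which changes neither its length nor the configuration), Proposition \ref{proposition42} gives that the first variation of that sub-curve is $\mathbf v\cdot\hat{\mathbf u}_j$, where $S_j$ is the segment of that sub-curve and $\hat{\mathbf u}_j$ is its unit vector with the stated orientation. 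Degenerate (zero-length) segments, which appear when the $i$th disk is tangent to a neighbouring disk, are covered by the limiting case already built into the proof of Proposition \ref{proposition42}, with $\hat{\mathbf u}_j$ the common tangent direction at the contact point.

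Finally I would sum over all sub-curves: only the $2k$ sub-curves meeting the $i$th disk contribute, and the $j$th such contribution is $\hat{\mathbf u}_j\cdot\mathbf v$, whence $D\ell(\mathbf v)=(\hat{\mathbf u}_1+\cdots+\hat{\mathbf u}_{2k})\cdot\mathbf v$. I would also point out that the derivative is genuinely one-sided: as the disk is displaced the combinatorics of $\gamma$ (which arcs degenerate, whether a given segment has positive length, and which of the distance constraints between disk centres are active) may jump, but for each fixed direction $\mathbf v$ the right-hand limit $\lim_{t\to 0^+}$ is well defined and is what the computation above evaluates.

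I expect the main obstacle to be organisational rather than analytic: ensuring that the decomposition into Proposition \ref{proposition42} pieces tiles $\gamma$ with each arc and each segment counted exactly once, that the ``fixed versus variable arc'' roles can always be assigned consistently (possibly after reversing a sub-curve), and that degenerate arcs and segments -- in particular contributions from a point where $\gamma$ merely grazes the $i$th disk, or from a pair of mutually tangent disks -- are bookkept with the orientation that makes the signs combine into the clean formula. Everything else is an immediate application of Proposition \ref{proposition42} together with the fact that rigid pieces contribute nothing to the first variation.
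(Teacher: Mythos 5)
Your proposal is correct and is essentially the argument the paper intends: the Observation is justified there exactly by decomposing the $cs$ diagram into arc--segment--arc pieces of the type in Proposition \ref{proposition42}, applying that proposition to the pieces meeting the moving disk (the others being rigid), and summing the contributions $\hat{\bf u}_j\cdot{\bf v}$. Your version merely makes explicit the choice of cut points on the arcs, the orientation bookkeeping for $\hat{\bf u}_j$, the degenerate-segment case and the one-sidedness of the derivative, all of which the paper leaves implicit.
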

\begin{figure} [h!]
\centering
\includegraphics[width=.3\textwidth]{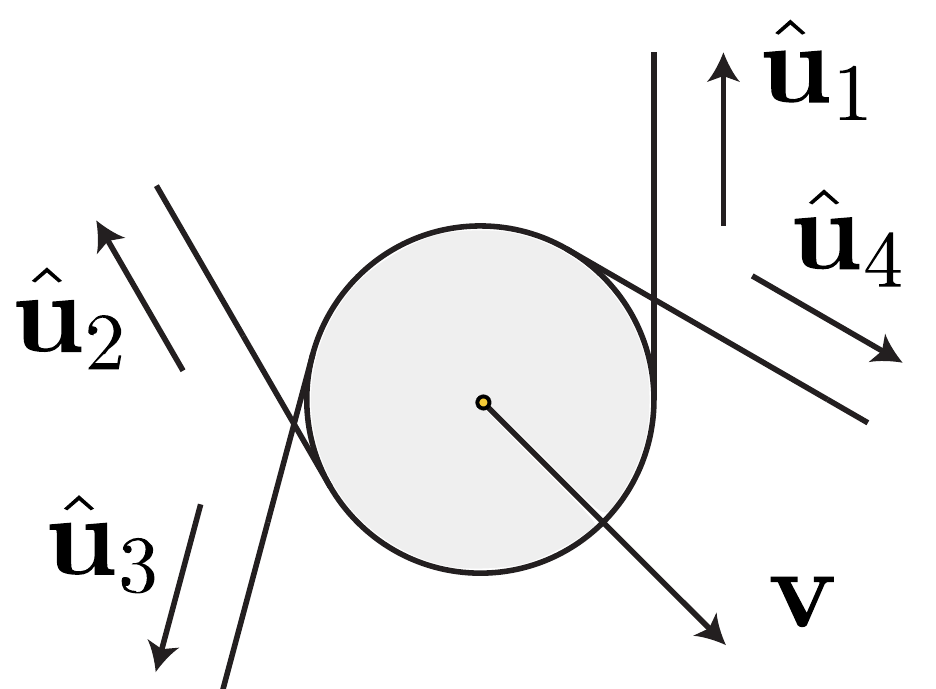}
\caption{}
\label{fig:perturbation}
\end{figure}


\section{Examples of ribbonlength minimisers}\label{ribexamples}

We compute the minimal ribbonlength for some small knot and link diagrams. We discuss the minimal ribbonlength of the standard figure-8 knot diagram and observe that there exists a minimum length disk diagram that does not satisfy the separation bound. We give a conjecture about the minimal ribbonlength for the standard figure-8 knot diagram. We finish this section by providing an infinite family of link diagrams whose minimal ribbonlength can be easily computed.


\subsection{The minimal ribbonlength of the unknot}\hfill

Since $\mbox{Length}(\gamma)=2\pi$, the ribbonlength of the unknot is clearly $\mbox{Rib(unknot)}=\pi$ for the standard diagram $\gamma$, see Fig. \ref{fig:diagramunknot}. We will see in the next section that this is also the minimal ribbon length over all diagrams of the unknot. 


\begin{figure} [h!]
\centering
\includegraphics[width=.9\textwidth]{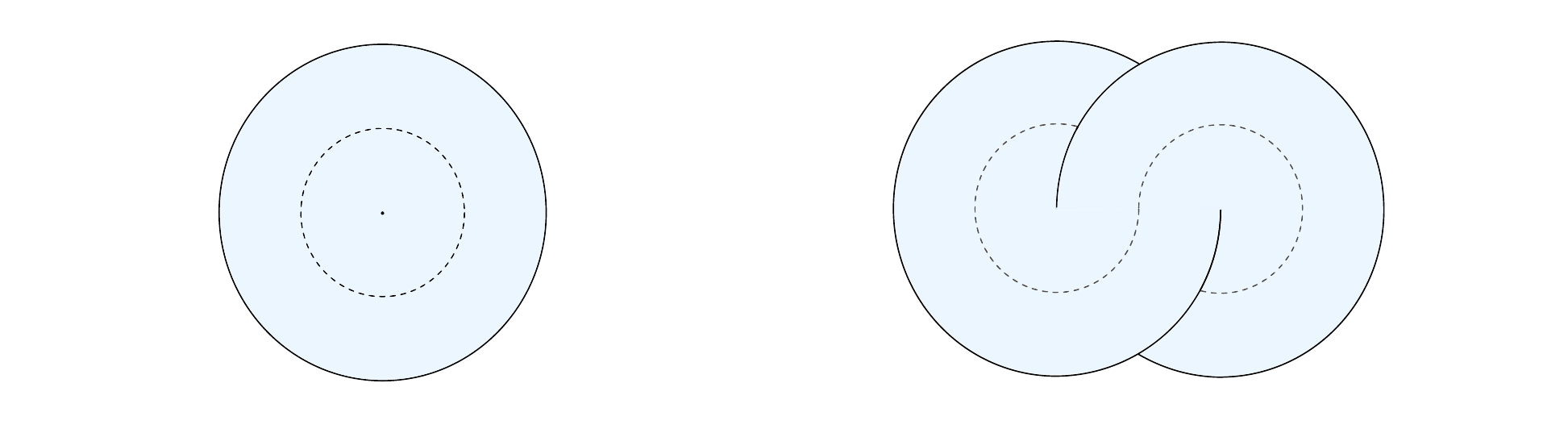}
\caption{Left: the ribbon knot of minimal ribbonlength for the standard unknot diagram. Right: the ribbon knot of minimal ribbonlength for the standard unknot with one twist. }
\label{fig:diagramunknot}
\end{figure}

\subsubsection{The twisted unknot.}\label{twisted} We compute the effect of a twist on the ribbonlength of the unknot. See Fig. \ref{fig:diagramunknot}. Without loss of generality, consider the disk diagram in Fig. \ref{fig:diskdiagrams} $(a)$, and fix the position of $D_0$. We claim that $D_1$ must touch $D_0$ at a single point. Suppose to the contrary that $D_1$ does not touch $D_0$. Let the contribution to the negative gradient of length from the centre of $D_1$ be denoted by $\bf{u}$. If $D_0$ and $D_1$ do not touch, we can always decrease the length of the disk diagram by perturbing the disk $D_1$ in the direction of the negative gradient $\bf{u}$. In addition, since $D_0$ and $D_1$ belong to different regions separated by the disk diagram, the distance between their centres is at least 2. We conclude that $D_0$ and $D_1$ intersect at a single point. It is easy to see that the minimal length disk diagram is in fact a minimal length ribbon loop. Since the length of the minimal ribbon loop is $4\pi$, the ribbonlength of a twisted unknot is $2\pi$. 

In general, a twist adds $2 \pi$ to the length of a minimal disk diagram by a similar argument. 


\subsection{The minimal ribbonlength of the standard trefoil knot diagram}\label{trefoil}\hfill

We will prove that the disk diagram for the standard trefoil diagram in Fig. \ref{fig:DiagramTrefoil} left is a length minimising ribbon loop. The disks are labelled $D_0$, $D_1$, $D_2$ and $D_3$ as in Fig. \ref{fig:diskdiagrams} $(b)$-$(d)$. Let us fix the position of the disk $D_0$. We will show that each of the three disks $D_1$, $D_2$ and $D_3$ touches the disk $D_0$ and a length minimising loop is connected to it by straight segments. Suppose to the contrary that one of the three disks $D_1$, $D_2$ and $D_3$, say $D_2$, touches no other disks. In order for the disk $D_2$ to be in mechanical equilibrium, meaning that we cannot decrease the length of the curve by perturbing the disk $D_2$, the disk diagram must be as in Fig. \ref{fig:DiagramTrefoil} $(b)$. In this case, the disk $D_2$ is connected to the disk $D_0$ by two straight segments and to each of the disks $D_1$ and $D_3$ by one straight segment. We can decrease the length of the curve by perturbing one of the two disks $D_1$ and $D_3$, say $D_1$, in the direction of a vector ${\bf v}$ which makes an acute angle with the negative gradient.


\begin{figure} [h!]
\centering
\includegraphics[width=.8\textwidth]{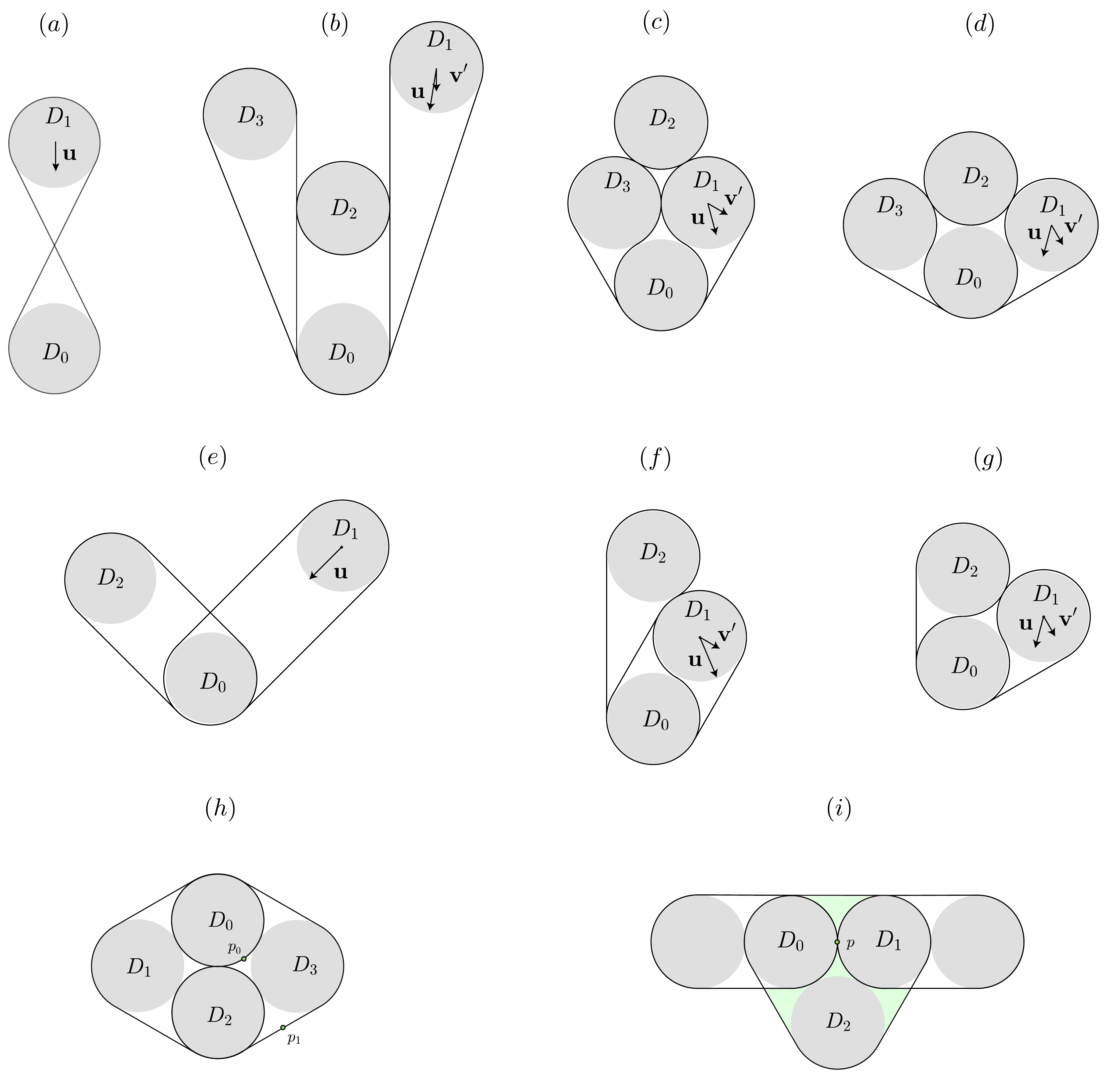}
\caption{The black trace represents a disk diagram for the standard trefoil knot diagram for $(b)$-$(d)$ and for the standard Hopf link diagram for $(e)$-$(g)$. The vector $\bf{u}$ denotes a contribution to the negative gradient for the length of a disk diagram. The centre of $D_1$ moves along a smooth curve with derivative at its initial position being the vector $\mbox{\bf v}$. The illustration $(h)$ is a minimal crossing trefoil disk diagram that violates the separation bound as $||p_0-p_1||<2$. In $(i)$ the green portions are part of the same region as $p$ is a tangential (fake) crossing. The disk $D_2$ is obtained from Proposition \ref{disk}, see Observation \ref{fake}.}
\label{fig:diskdiagrams}
\end{figure}

Next, suppose that one of the three disks $D_1$, $D_2$ and $D_3$, say $D_2$, is prevented from touching the disk $D_0$ by at least one of the two disks $D_1$ and $D_3$, see Fig. \ref{fig:diskdiagrams} $(c)$. In this case, we cannot perturb the disk $D_1$ in the direction of the negative gradient, because it will intersect the disk $D_0$. However, we can perturb the disk $D_1$ in the direction of a vector $\bf{v}$ which makes an acute angle with the negative gradient in order to decrease the length of the curve.

Finally, suppose that each of the disks $D_1$, $D_2$ and $D_3$ touches the disk $D_0$, but that at least one of the disks $D_1$, $D_2$ and $D_3$, say $D_2$, is not connected to the disk $D_0$ by straight segments, see Fig. \ref{fig:diskdiagrams} $(d)$. In this case, we can perturb one of the disks $D_1$ or $D_3$ in the direction of a vector $\bf{v}$ which makes an acute angle with the negative gradient in order to decrease the length of the curve.

 Observe that there is a 2-parameter family of length minimisers that is obtained by rotating two of the three disks $D_1, D_2$ and $D_3$ about the disk $D_0$. To see this, observe that the contribution to the negative gradient from the position of, say, the disk $D_1$, which we denote by ${\bf u}$, points from the centre of the disk $D_1$ to that of the disk $D_0$. Since the disk $D_0$ is held fixed, the smallest admissible angle between the vectors ${\bf u}$ and ${\bf v}$, i.e. the perturbation vector, is $\pi/2$. That is, the rate of change of the length of the core $\gamma$ is at least zero. This is true so long as we rotate the disk $D_1$ about the disk $D_0$ in some neighbourhood of the original configuration.

In Fig. \ref{fig:diskdiagrams} $(h)$ we illustrate the other minimal crossing disk diagram for the trefoil. This cannot be a ribbon loop since it violates, for example, the separation bound since $||p_0-p_1||< 2$.

Since $\mbox{Length}(\gamma)= 12 + 4\pi$, the ribbonlength of the standard trefoil knot diagram is $\mbox{Rib}(\mbox{trefoil})= 6 + 2 \pi$.


\begin{figure} [h!]
\centering
\includegraphics[width=\textwidth]{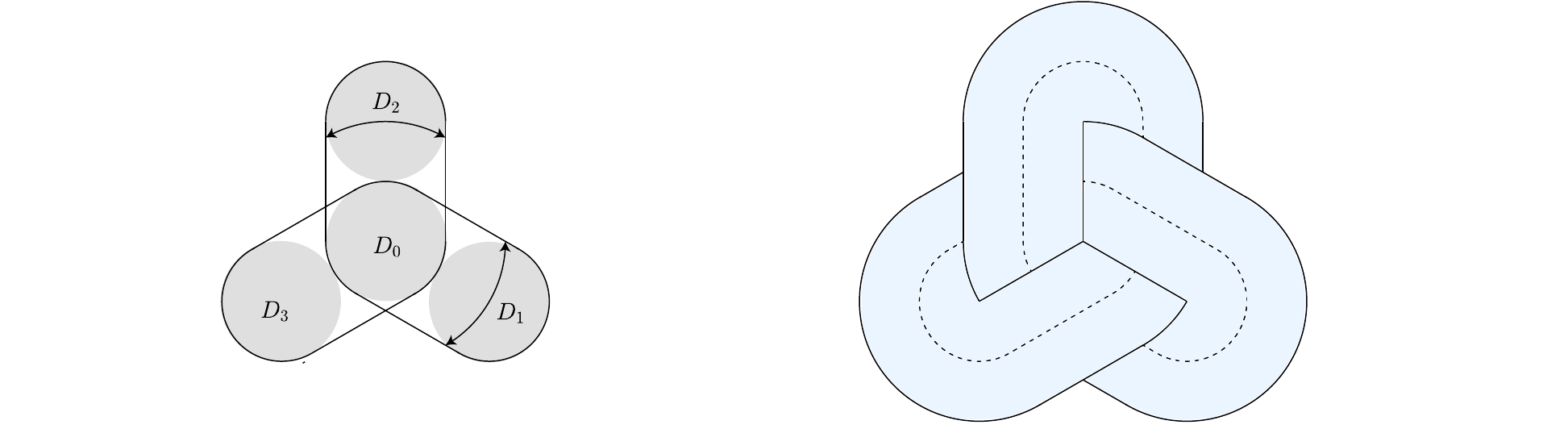}
\caption{Left: A length minimising ribbon loop (core) for the standard trefoil diagram. Right: a ribbon knot of minimal ribbonlength for this trefoil diagram.}
\label{fig:DiagramTrefoil}
\end{figure}


\begin{observation}{\bf Tangential crossings do not separate disk regions}. \label{fake} Recall that in Definition \ref{diskspace} we allowed crossings to be tangential in disk space. Consider three simple loops with an intersection pattern like that of three Olympic circles (a planar projection of the Borromean rings). Note that a minimal double point representative of this  diagram separates the plane into six regions, one of which is unbounded. A tight disk diagram for the three Olympic circles is shown in Fig. \ref{fig:diskdiagrams} $(i)$. Therefore the length minimiser in disk space divides the plane into seven connected domains, two of which are separated by a tangential crossing denoted by $p$. In Fig. \ref{fig:diskdiagrams} $(i)$ the green portions are part of the same complementary region and $D_2$ is the disk obtained from Proposition \ref{disk}.
\end{observation}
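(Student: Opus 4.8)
The plan is to pin down the length minimiser in the component $\mathcal D_\gamma$ of disk space containing the Olympic-circles diagram $\gamma$, and then read off its combinatorics.

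First I would record the region count. A minimal double point representative of $\gamma$ is a $4$-valent planar graph with $V=4$ vertices (the transverse double points), hence $E=2V=8$ edges and, by Euler's formula, $F=2-V+E=6$ faces: the unbounded region together with five bounded ones, namely the three regions each lying inside a single circle and the two lens regions of the overlapping pairs. By Theorem~\ref{ribcs}(1) a length minimiser $\hat\gamma$ of $\mathcal D_\gamma$ exists and is $cs$; by Proposition~\ref{disk} each of its bounded complementary regions carries an open unit disk; and since membership in $\mathcal D_\gamma$ is determined by the combinatorial type of the complementary regions, $\hat\gamma$ has the same six complementary regions, combinatorially, as the minimal representative.

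Next I would identify $\hat\gamma$ by the equilibrium method used for the trefoil in Section~\ref{trefoil}. One parametrises the $cs$ diagrams in $\mathcal D_\gamma$ by the centres of the unit disks in the bounded regions, subject to the constraint that centres of disks in distinct regions lie at distance at least $2$ (cf.\ Section~\ref{twisted} and Proposition~\ref{disk}), with the loops running along straight segments tangent to these disks joined by arcs of unit circles. By Propositions~\ref{proposition41} and~\ref{proposition42} the first variation of length under moving one disk is the inner product of the velocity with the sum of the outward unit tangents of the segments meeting that disk, so a configuration can be critical only if no disk admits a perturbation making an acute angle with the corresponding negative gradient while respecting the distance constraints. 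I expect this case analysis to proceed exactly as for the trefoil and to force the tight configuration of Fig.~\ref{fig:diskdiagrams}$(i)$: every disk is pushed against its neighbours, and the long region lying inside the central circle is squeezed on both sides until its two halves are pinched to a single point $p$. Concretely, as long as this region still has a boundary ``waist'' of positive width, one can shorten the diagram by sliding an adjacent disk, so the infimum is attained only in the pinched configuration.

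Finally I would interpret the pinch. At $p$ two branches of $\hat\gamma$ have a common tangent, so $p$ is a non-transverse double point; by Definition~\ref{diskspace} and the third and fourth pictures of Fig.~\ref{fig:ribcross} such a crossing does not separate complementary regions, so the two ``green'' domains on either side of $p$ together form a single complementary region $R$, which by Proposition~\ref{disk} carries an open unit disk — the disk $D_2$ of Fig.~\ref{fig:diskdiagrams}$(i)$. Thus $\hat\gamma$ cuts the plane into seven connected open domains (the five bounded regions of $\gamma$ with the central one split by $p$, plus the unbounded domain) but only six complementary regions, matching $\gamma$, and in particular this disk-space minimiser is not a ribbon loop or link. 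The main obstacle is the middle step: showing that the pinch at $p$ is genuinely forced rather than merely possible, which is the geometric content encoded in Fig.~\ref{fig:diskdiagrams}$(i)$ and amounts to verifying that every configuration keeping all five bounded regions genuinely disjoint can be shortened, hence is not a minimiser.
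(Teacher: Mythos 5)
Your proposal matches the paper's own treatment of this Observation, which consists precisely of the region count for the four-crossing diagram, the tight configuration exhibited in Fig.~\ref{fig:diskdiagrams}$(i)$, and the definitional point (Definition~\ref{diskspace}, cf.\ Fig.~\ref{fig:ribcross}) that a tangential, non-crossing double point does not separate complementary regions, so the two green domains form one region carrying the single unit disk $D_2$ and the minimiser has seven connected domains but six complementary regions. The step you flag as incomplete --- that the length minimiser really is the pinched configuration --- is not argued in the paper either (it is simply read off from the figure as ``tight''), so your outline, with its Euler-characteristic count and trefoil-style variational sketch, is consistent with and somewhat more detailed than the paper's own reasoning.
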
 


\subsection{The minimal ribbonlength for the standard Hopf link diagram}\label{hopf}\hfill 

We will prove that the disk diagram corresponding to the standard Hopf link diagram in Fig. \ref{fig:DiagramHopfLink} left is a length minimising ribbon loop. The disks are labelled $D_0$, $D_1$ and $D_2$ as in Fig \ref{fig:diskdiagrams} $(e)$-$(g)$. Let us fix the position of the disk $D_0$. We will show that each of the two disks $D_1$ and $D_2$ touches the disk $D_0$ and is connected to it by straight segments for a length minimising representative. Suppose to the contrary that at least one of the two disks $D_1$ and $D_2$, say $D_1$, does not touch the disk $D_0$. In particular, the disk $D_1$ touches no other disks. In Fig. \ref{fig:diskdiagrams} $(e)$, the contribution to the negative gradient from the position of the disk $D_1$ is denoted by $\bf{u}$. We can decrease the length of the curve by perturbing the disk $D_1$ in the direction of the negative gradient. Similarly, if the two disks $D_1$ and $D_2$ touch one another but do not touch the disk $D_0$, then we can perturb the two disks $D_1$ and $D_2$ collectively in the direction of the negative gradient.

Next, suppose that one of the two disks $D_1$ and $D_2$, say $D_1$, serves as an obstacle preventing the disk $D_2$ from touching the disk $D_0$, see Fig.\ref{fig:diskdiagrams} $(f)$. Let $\bf{u}$ denote the contribution to the negative gradient from the position of the disk $D_1$. In this case, we cannot perturb the disk $D_1$ in the direction of the negative gradient, because it touches the disk $D_0$. However, we can perturb the disk $D_1$ in the direction of a vector $\bf{v}$ which makes an acute angle with the vector $\bf{u}$. In this way, we can decrease the length of the curve.

Finally, suppose that both of the disks $D_1$ and $D_2$ touch the disk $D_0$, but that neither of the two disks $D_1$ and $D_2$ is connected to the disk $D_0$ by straight segments, see Fig. \ref{fig:diskdiagrams} $(g)$. In this case, we can perturb the disk $D_1$ in the direction of a vector $\bf{v}$ which makes an acute angle with the negative gradient in order to decrease the length of the curve.

Since $\mbox{Length}(\gamma)=8 + 4\pi$, the ribbonlength of the standard Hopf link diagram is $\mbox{Rib}(\mbox{Hopf link})=4+2\pi$.


\begin{figure} [h!]
\centering
\includegraphics[width=1\textwidth]{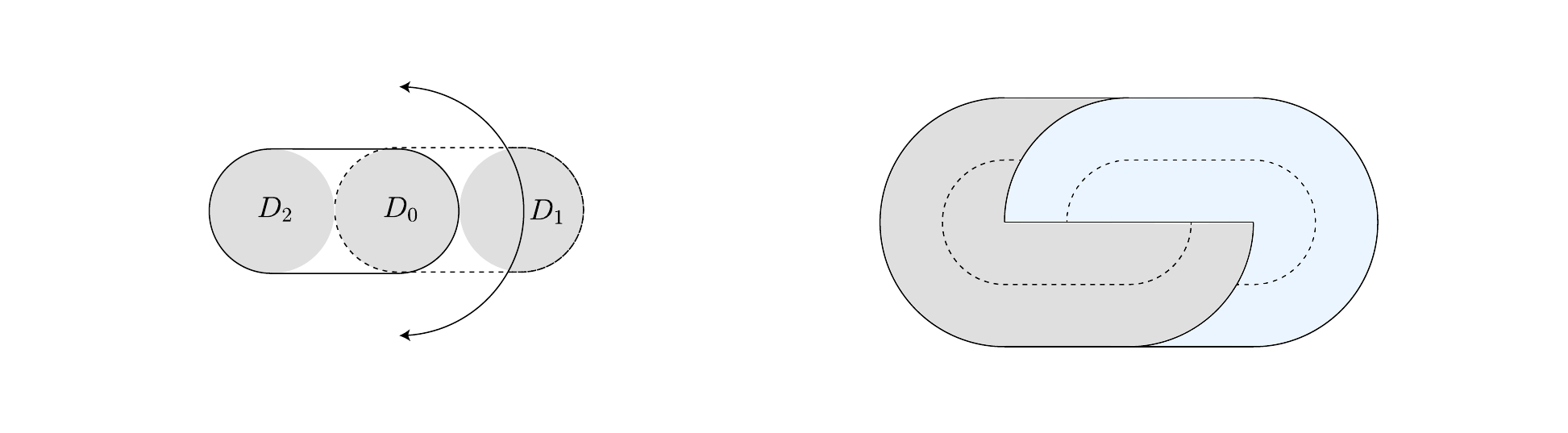}
\caption{Left: A length minimising ribbon loop (core) for the standard Hopf link diagram. Note that there is a 1-parameter family of minimisers (up to isometries) that is obtained by rotating $D_1$ or $D_2$ about the disk $D_0$. Right: the ribbon link of minimal ribbonlength for the standard Hopf link diagram.}
\label{fig:DiagramHopfLink}
\end{figure}



\subsection{On the ribbonlength of the standard figure-8 knot diagram}\label{fig8} \hfill

 Regarding the disk diagram for the standard figure-8 knot diagram, there is a simple argument to show that the configuration depicted in Fig. \ref{fig:dcp} left, where the centres of the disks $D_0, D_1, D_2, D_3$ and $D_4$ are positioned at the points $(0, 0), (-\sqrt{3}, -1), (0, -2), (\sqrt{3}, -1)$ and $(0, 2)$, respectively, gives a minimum length element in disk space $\mathcal D$. To see this, observe that we can decompose the core $\gamma$ into three loops $\gamma_1, \gamma_2$ and $\gamma_3$ as in Fig. \ref{fig:dcp}. By a variational argument similar to those given above, it is clear that the respective lengths of the loops $\gamma_1, \gamma_2$ and $\gamma_3$ must be at least $2 \pi, 4 + 2 \pi$ and $8 + \pi$ due to the number of disks contained inside each of the three loops. Since the configuration depicted in Fig. \ref{fig:dcp} left attains all three bounds, it gives a minimum length element in disk space $\mathcal D$ for disk diagrams of the combinatorial type of the standard figure-8 knot diagram. The corresponding ribbon is illustrated in Fig. \ref{fig:knotfig-8}. Note there is a 1-parameter family of length minimisers that is obtained by rotating the disk $D_4$ about the disk $D_0$.


\begin{figure} [h!]
\centering
\includegraphics[width=.9\textwidth]{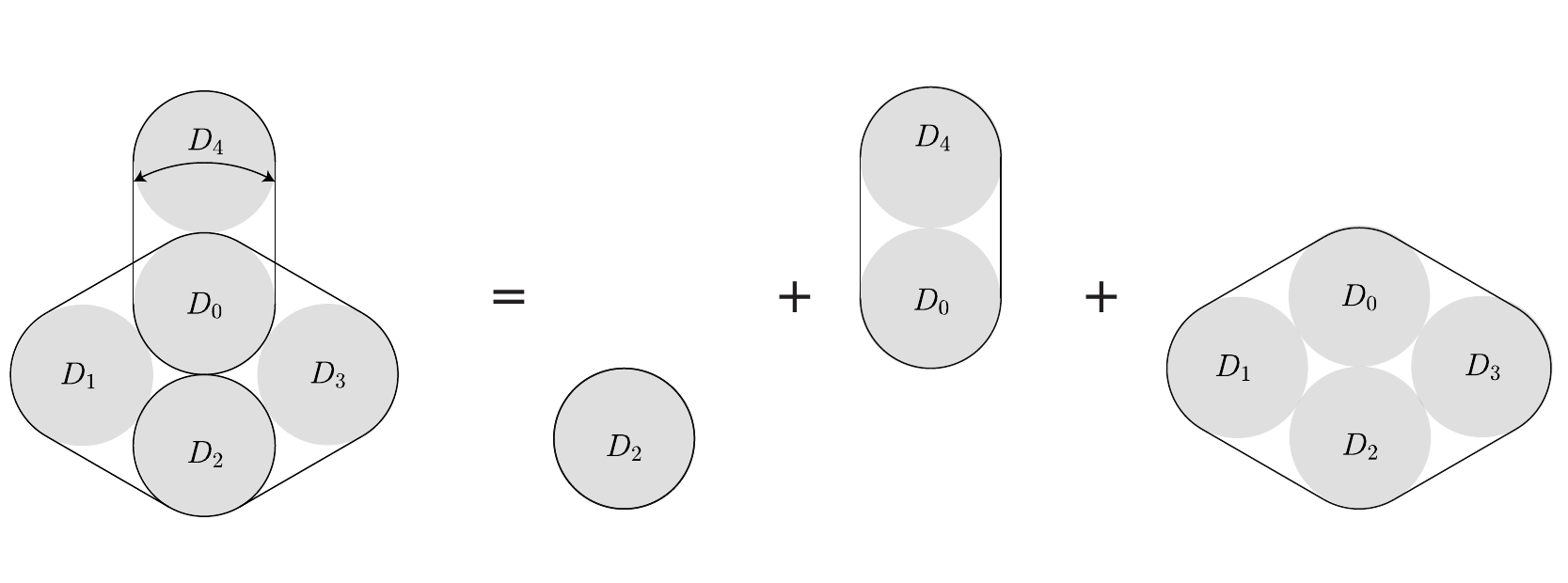}
\caption{A decomposition of a minimal disk diagram for the standard figure-8 knot diagram.}
\label{fig:dcp}
\end{figure}

\begin{observation} The previous example of a minimiser in $\mathcal D$ for disk diagrams with the combinatorial type of the standard figure-8 knot diagram violates the non-overlapping condition. This configuration of disks does not lead to a ribbon knot diagram since the associated ribbon loop violates the separation condition $||p_0-p_1||<2$, and consequently, the ribbon violates the non-overlapping condition, see Fig. \ref{fig:knotfig-8}. Observe that one segment of the ribbon overlaps another in a coloured wedge in Fig. \ref{fig:knotfig-8} (left), even though the corresponding core components do not intersect one another. A third ribbon segment lies above one of the two segments and beneath the other. The non-overlapping condition places an additional constraint on the disk diagram required for it to be ribbon. 

A conjectured minimiser that satisfies the non-overlapping condition is depicted in Fig. \ref{fig:knotfig-8} at  right, where the centres of the disks $D_0, D_1, D_2, D_3$ and $D_4$ are positioned at the points $(0, 0), (-\sqrt{2}, -\sqrt{2}), (0, -2\sqrt{2}), (\sqrt{2}, -\sqrt{2})$ and $(0, 2)$, respectively.


\begin{figure} [h!]
\centering
\includegraphics[width=\textwidth]{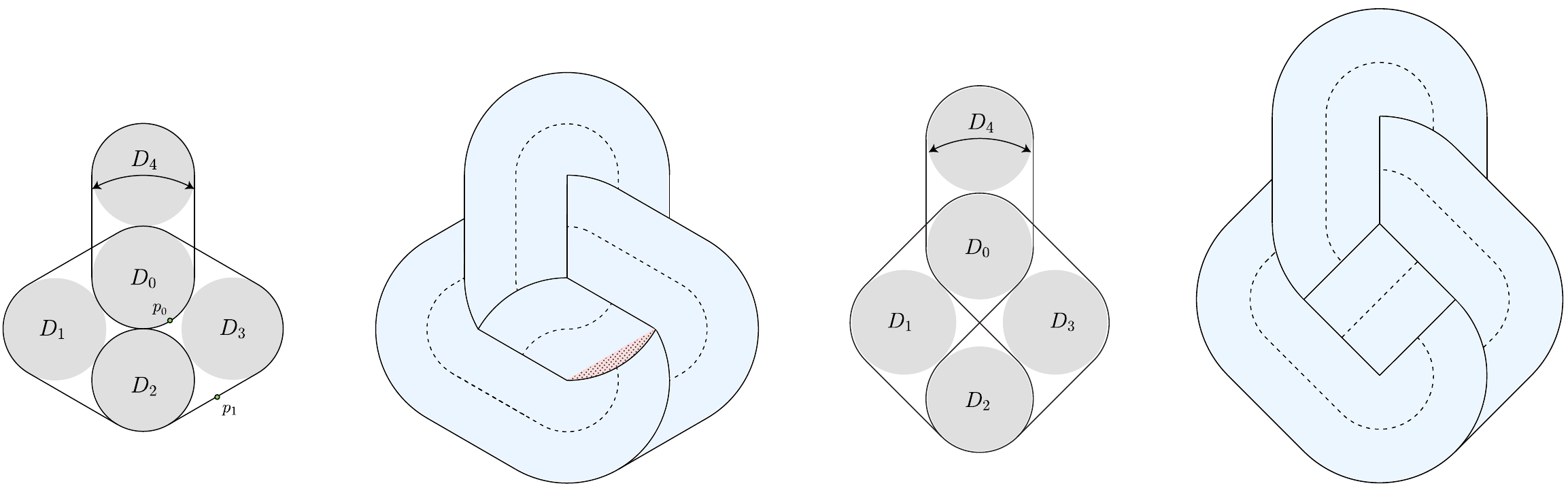}
\caption{From left to right: a length minimising disk diagram in $\mathcal D$ with the combinatorial type of the standard figure-8 knot diagram. The ribbon associated with the core is at the left. The coloured wedge illustrates an overlap between two branches of the ribbon. A conjectured length minimising ribbon loop for the standard figure-8 knot diagram on the right. The conjectured ribbon knot associated with the ribbon loop on the right.}
\label{fig:knotfig-8}
\end{figure}
\end{observation}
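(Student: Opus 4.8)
The plan is to fill in the details behind the two claims of the observation: that the disk configuration with centres $D_0=(0,0)$, $D_1=(-\sqrt3,-1)$, $D_2=(0,-2)$, $D_3=(\sqrt3,-1)$, $D_4=(0,2)$ realises the minimum of length in $\mathcal D$ for the figure-$8$ combinatorial type while failing the separation bound, and that this failure forces the associated annular neighbourhood $\Gamma$ to violate the non-overlapping condition, so that the configuration is not a ribbon knot diagram. The first claim is already established in the paragraph preceding the observation, via the decomposition of the core into the subloops $\gamma_1,\gamma_2,\gamma_3$ of Fig.\ \ref{fig:dcp} and the fact that each attains its length bound; I would only recall it.

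For the separation failure I would write the core explicitly as a $cs$ curve — a $C^1$ concatenation of arcs of the five unit circles centred at $D_0,\dots,D_4$ together with their common tangent segments, arranged to reproduce the standard figure-$8$ diagram — and then use the rigidity of the packing: $D_0,D_1,D_2,D_3$ form a rhombus of side $2$ whose diagonal $D_0D_2$ is also of length $2$ (so $D_0$ and $D_2$ are externally tangent at $(0,-1)$ and the four disks split into two equilateral triangles), while $D_4$ is externally tangent to $D_0$ at $(0,1)$. In this packing the pair of core strands that the figure-$8$ combinatorics forces to run on opposite sides of one of the crossings is pinched; I would identify the two points $p_0,p_1$, one on each strand, at which the unit normals to the core are anti-parallel, and verify by direct computation with these coordinates that $||p_0-p_1||<2$. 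By the critical self-distance part of Observation \ref{obs}, $||p_0-p_1||$ is then a double critical self-distance, so the midpoint of the segment $p_0p_1$ lies on the medial axis of the core at distance $<1$; this is precisely the failure of the separation bound (equivalently, the maximal circle squeezed through that neck has radius $<1$).

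For the non-overlapping failure I would then translate this back into the language of Observation \ref{obs}: since the two strands lie at distance $<2$ without meeting, the interiors of their unit normal neighbourhoods overlap in a lens-shaped region — the shaded wedge of Fig.\ \ref{fig:knotfig-8} — and, because these two strands bound two distinct complementary regions of the core (they are separated by a genuine crossing of the diagram, not a tangential one), the two corresponding branches of $\Gamma$ genuinely intersect even though their cores do not; a local inspection of the three strands meeting near the neck shows a third branch of $\Gamma$ passing over one of them and under the other, so the overlap cannot be removed by any admissible choice of sides. This contradicts the non-overlapping condition, hence the crossing condition of Definition \ref{crossingcond}, so the minimiser in $\mathcal D$ is not a ribbon knot diagram. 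I would not attempt to prove that the repositioned configuration of Fig.\ \ref{fig:knotfig-8} (right), which does satisfy the non-overlapping condition, is the true ribbon minimiser.

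The step I expect to be the main obstacle is the explicit bookkeeping in the second paragraph: in this symmetric packing one must pin down exactly which arcs of which unit circles the figure-$8$ core traverses, and on which sides, in order to locate the pinched neck and produce the pair $p_0,p_1$ with $||p_0-p_1||<2$. Once that pair is in hand, both the separation failure and the overlap of the two $\Gamma$-branches follow formally from Observation \ref{obs} and Definition \ref{crossingcond}.
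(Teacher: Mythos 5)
Your proposal follows essentially the same route as the paper: minimality in disk space is simply inherited from the preceding decomposition argument, and the failure to be ribbon is read off from the explicit tangent-disk packing by exhibiting a doubly critical pair $p_0,p_1$ on non-intersecting strands with $\|p_0-p_1\|<2$, whence the separation bound fails and the corresponding branches of $\Gamma$ overlap (the coloured wedge, with a third branch passing over one and under the other), violating the non-overlapping condition; like the paper, you correctly leave the right-hand configuration as a conjecture. The only small imprecision is that the midpoint of $p_0p_1$ need not itself lie on the medial axis --- rather, the maximal circle tangent at $p_0$ on the side of $p_1$ has radius at most $\|p_0-p_1\|/2<1$ and its centre is the medial-axis point witnessing the violation, which is the reformulation you already note parenthetically.
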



\subsection{Minimal length for the standard Whitehead link diagram in disk space}\label{whitehead}\hfill 


\begin{figure} [h!]
\centering
\includegraphics[width=1\textwidth]{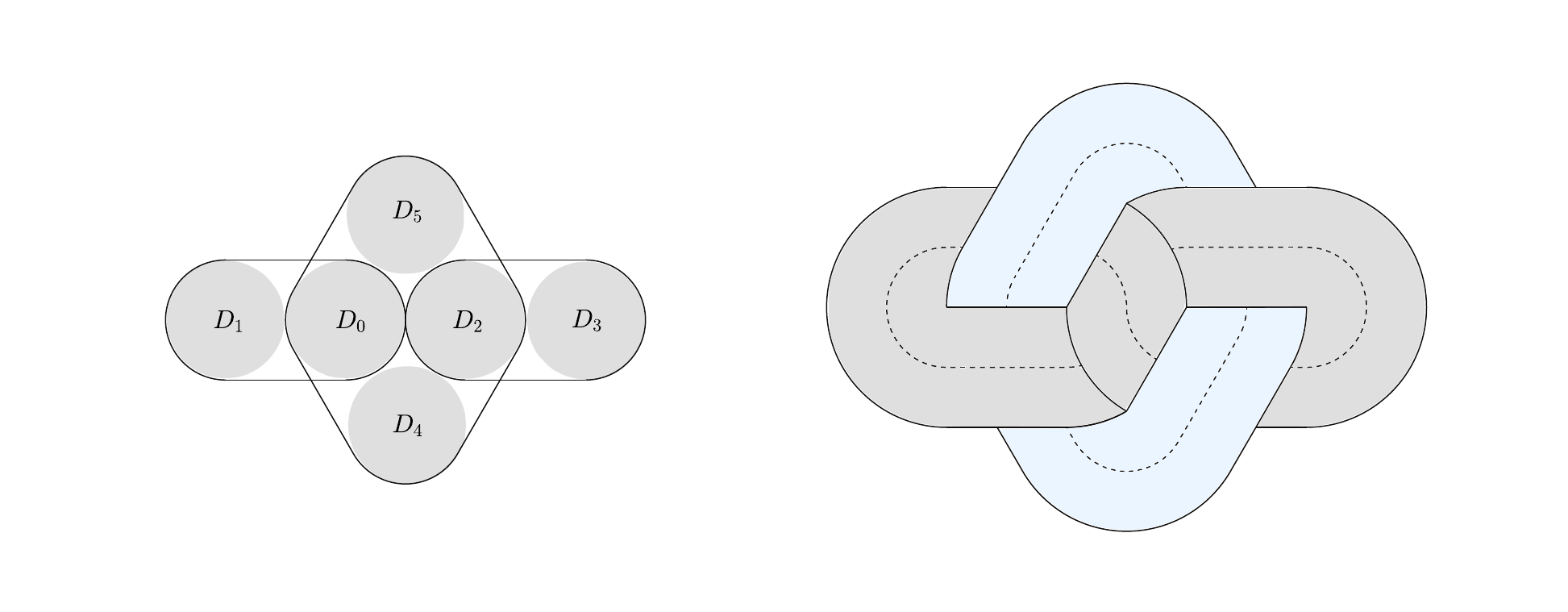}
\label{fig:Whitehead}
\caption{Left: A length minimising core for the standard Whitehead link diagram in disk space. Right: the corresponding ribbon for the standard Whitehead link diagram, violating the non-overlapping condition.}

\label{fig:knotwhitehead1}
\end{figure}
 In the case of the standard Whitehead link diagram, we may apply a similar decomposition argument but we again run into the non-overlapping condition. That is, we can decompose the core $\gamma$ into three loops, two of which contain two disks and one of which contains four disks. By a variational argument, it is easy to see that the respective lengths of the loops must be at least $4 + 2\pi, 4 + 2\pi$ and $8 + 2\pi$ due to the number of disks contained inside each of the three loops. Since the configuration depicted in Fig. 12 left attains all three bounds, it gives a minimum length element in disk space $\mathcal D$ for disk diagrams of the combinatorial type of the standard Whitehead link diagram. The corresponding ribbon is illustrated in Fig. 12. Note there is a 2-parameter family of length minimisers that is obtained by rotating the disk $D_1$ about the disk $D_0$ and the disk $D_3$ about the disk $D_2$.
Since $\mbox{Length}(\gamma)= 16 + 6\pi$, the minimium length of the standard Whitehead link diagram in disk space is $\mbox{Rib}(\mbox{Whitehead link})=  8 + 3\pi$.

\subsection{Minimal length in disk space for diagram family 1}\hfill

 Each member of the infinite family of link diagrams $F_1$, an example of which is depicted in Fig. \ref{fig:whiteheadfamily2}, comprises $n$ Whitehead links concatenated via nugatory crossings. For this choice of knot diagram as in Fig. \ref{fig:whiteheadfamily2}, we show via a decomposition argument that the minimum length of the family $F_1$ of link diagrams in disk space is
\begin{equation*}
\mbox{Rib}(F_1) =  \mbox{Rib}(\mbox{Whitehead link}) \times n = (8 + 3\pi)n.
\end{equation*}
To see this, observe that the core can be decomposed into $2n$ loops, each of which contains two disks, in addition to $n$ loops, each of which contains four disks. By a variational argument, a loop containing two disks must be of length at least $4 + 2\pi$, whereas a loop containing four disks must be of length at least $8 + 2\pi$. Since the geometric realisation of the link diagram depicted in the Fig. \ref{fig:whiteheadfamily2} attains the above bounds, it must give a minimum for the length of this particular diagram in disk space.

\begin{figure} [h!]
\centering
\includegraphics[width=1\textwidth]{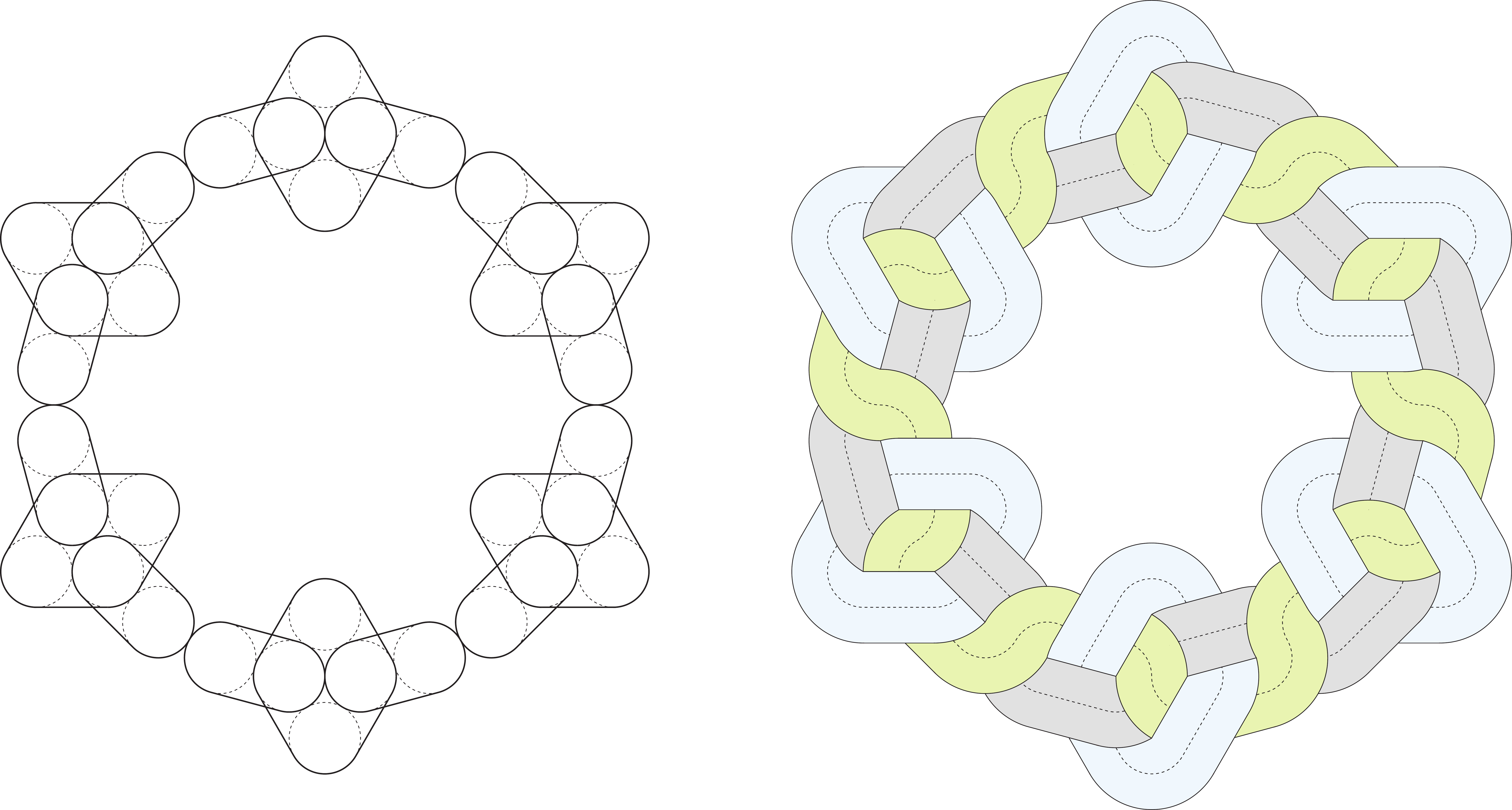}
\caption{Here the core does not satisfy the separation bound. So the ribbon is overlapping and this only gives the minimum length in disk space and is not a ribbon link diagram}
\label{fig:whiteheadfamily2}
\end{figure}

\subsection{Minimal ribbonlength for diagram family 2} \hfill

 Each member of the infinite family of link diagrams $F_2$, an example of which is depicted in Fig. \ref{fig:HopfBGG}, comprises $n$ Hopf links concatenated via nugatory crossings. Observe that the core can be decomposed into $2n$ loops, each of which contains two disks. By following a pattern of argument similar to that provided for the previous example, it is easy to see that the minimum length of the family of link diagrams $F_2$ amongst all disk diagrams is
\begin{equation*}
\mbox{Rib}(F_1) = \mbox{Rib}(\mbox{Hopf link}) \times n = (4 + 2\pi)n.
\end{equation*}


\begin{figure} [h!]
\center
\includegraphics[width=1\textwidth]{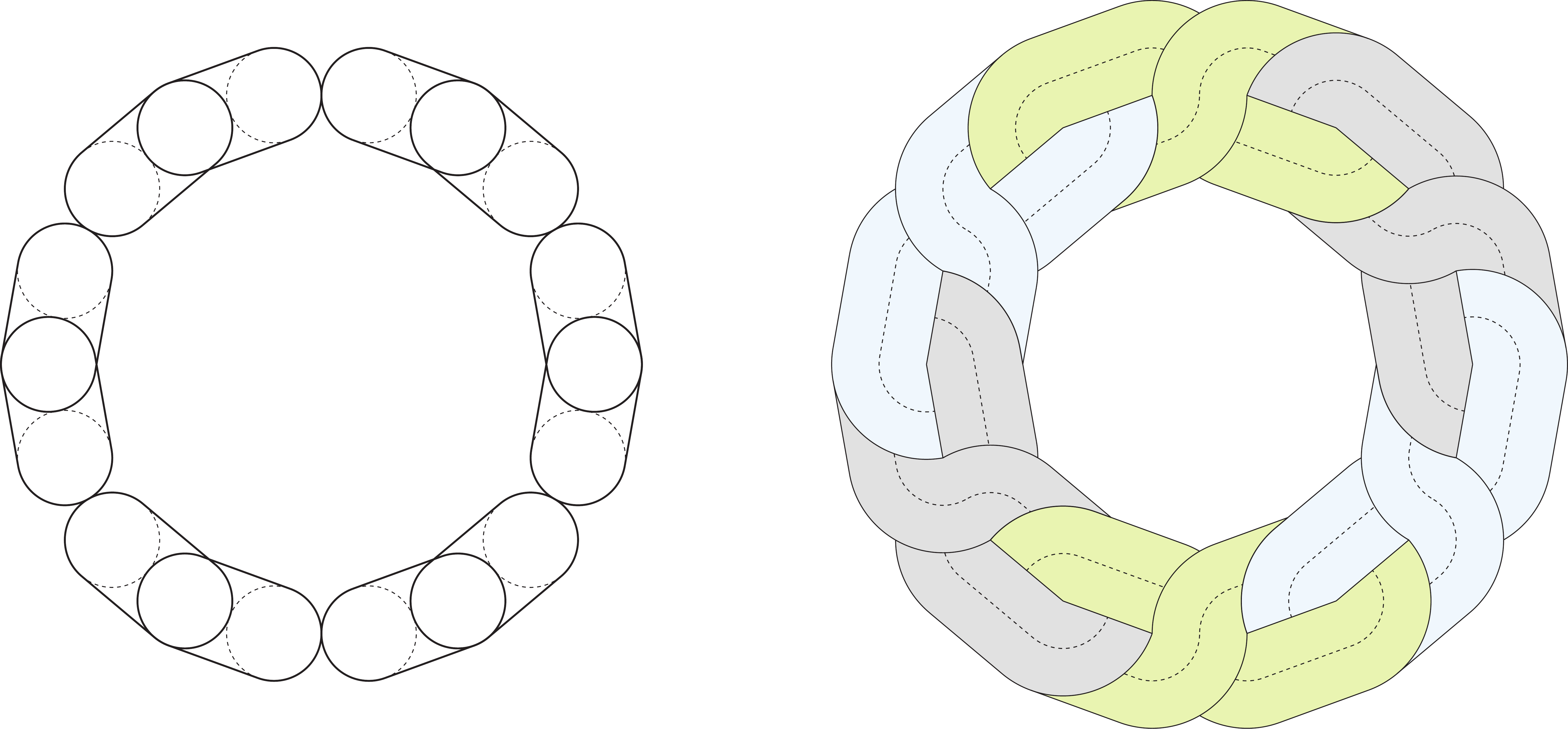}
\caption{Here the core obeys the separation bound, so the ribbon satisfies the non-overlapping condition. So this gives the minimum ribbon length diagrams}
\label{fig:HopfBGG}
\end{figure}

\section{Bounding the crossing number of minimal length knot and link ribbon and disk diagrams}

In this section, the aim is to give a useful bound on the crossing number of a projection of a knot or link which minimises ribbonlength amongst ribbon diagrams or length amongst disk diagrams. In this way, if the procedures of the previous sections could be made more efficient, then a practical method would follow for computing minimum ribbon length amongst all ribbon diagrams of a given knot or link, or the minimum length amongst all disk diagrams of a fixed knot or link. These both give (geometrical) invariants of knots and links, but the difficulty is to have a practical process for computing these minimum lengths. To illustrate the theory, we bound the crossing numbers for the trefoil knot and the Hopf link. But only for the trefoil knot, is the bound strong enough to immediately conclude that the minimum ribbon length can be computed over all diagrams. For the Hopf link, all diagrams with at most $4$ crossings need to be examined. 

\begin{theorem}\label{bound}

Suppose that $K$ is a knot or link in $\mathbb R^3$ and $\gamma$ is a ribbon (respectively disk) planar projection with length $\ell$. Then a planar projection of $K$ which minimises ribbon length (respectively length) amongst all ribbon diagrams (respectively disk diagrams) has at most $$[\frac{\ell}{\pi} -\frac{1}{2}\bigg( \sqrt{(1+\frac{4\ell}{\pi})}-1\bigg)]$$ crossings. 

\end{theorem}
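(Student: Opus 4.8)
The plan is to prove the sharper statement that every disk (respectively ribbon) diagram of $K$ with $c$ crossings has core length at least $\pi\big(c+\sqrt{c}\big)$, and then to apply this to a length minimiser. Indeed, the given diagram $\gamma$ is itself such a diagram with length $\ell$, so a minimiser of length among disk (respectively ribbon) diagrams of $K$ has length $L\le\ell$; if it has $c$ crossings then $\pi\big(c+\sqrt c\big)\le L\le\ell$, and solving this for $c$ yields the bound. Thus everything reduces to the length estimate, which I would obtain from an area and isoperimetric count of the complementary regions.

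The first step is the region combinatorics. A disk or ribbon diagram has only finitely many crossings by the double point condition (Theorem~\ref{doublepoint}, Corollary~\ref{finitepartition}); treating a split link componentwise --- which only helps, since $\sum_j\sqrt{a_j}\ge\sqrt{\sum_j a_j}$ --- we may assume the underlying $4$-valent graph is connected. Euler's formula then shows the diagram partitions the plane into $m\ge c+1$ bounded disk-regions $P_1,\dots,P_m$ together with one unbounded region $U$; here a disk-region is a union of atomic complementary domains merged along non-crossing double points and intervals as in Definition~\ref{diskspace}, and this merging does not bring the count below $c+1$, since each non-crossing double point creates one atomic domain (by Euler) and identifies at most two. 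By Proposition~\ref{disk} for a ribbon diagram, or by definition for a disk diagram, each $P_i$ contains an open disk of radius $1$, so $\mathrm{Area}(P_i)\ge\pi$; and the $P_i$ are pairwise disjoint open sets.

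The second step is the length accounting. Away from the finitely many crossings every point of the core has two sides, each lying on the closure of exactly one complementary region, so summing arc-lengths with the appropriate multiplicities gives $2L=\sum_{i=1}^{m}\mathrm{perim}(P_i)+\mathrm{perim}(U)$, where $\mathrm{perim}$ of a region denotes the total length of core arcs bordering it. Each $P_i$ is a bounded planar region with rectifiable boundary contained in the core, so the isoperimetric inequality gives $\mathrm{perim}(P_i)\ge\sqrt{4\pi\,\mathrm{Area}(P_i)}\ge 2\pi$. The complement of $U$ has rectifiable boundary $\partial U$ lying in the core and contains the disjoint regions $P_1,\dots,P_m$, hence has area at least $m\pi\ge(c+1)\pi$; the isoperimetric inequality gives $\mathrm{perim}(U)\ge\mathrm{Length}(\partial U)\ge\sqrt{4\pi(c+1)\pi}=2\pi\sqrt{c+1}$. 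Therefore $2L\ge 2\pi(c+1)+2\pi\sqrt{c+1}$, so $L\ge\pi\big(c+1+\sqrt{c+1}\big)\ge\pi\big(c+\sqrt c\big)$.

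Finally, writing $a=\ell/\pi$, the inequality $c+\sqrt c\le a$ gives $\sqrt c\le\tfrac12\big(\sqrt{1+4a}-1\big)$, hence $c\le a-\tfrac12\big(\sqrt{1+4a}-1\big)=\tfrac{\ell}{\pi}-\tfrac12\big(\sqrt{1+4\ell/\pi}-1\big)$; since $c$ is an integer it is at most the integer part of the right-hand side, which is the asserted bound. I expect the main obstacle to be the region bookkeeping in disk space --- verifying that tangential crossings and double intervals keep the number of bounded disk-regions at least $c+1$, that the merged disk-regions and the complement of $U$ are regular enough (finite perimeter, rectifiable boundary) for the isoperimetric inequality, and dealing cleanly with disconnected or split diagrams. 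The two isoperimetric estimates and the concluding algebra are otherwise routine.
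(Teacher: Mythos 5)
Your proposal is correct and follows essentially the same route as the paper's proof: Euler's formula gives at least $c+1$ bounded complementary regions, each containing a unit disk by Proposition \ref{disk} (or by the disk space definition), the isoperimetric inequality is applied once per bounded region and once to the outer boundary, the core length is double counted via region perimeters, and the resulting inequality $\frac{\ell}{\pi}\ge c+\sqrt{c}$ is solved for $c$. Your extra care with split diagrams, tangential crossings and the final algebra only tightens points the paper treats briefly (by perturbing to transverse crossings), so there is no substantive difference in approach.
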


\begin{proof}

Suppose  a planar projection $\gamma^*$ is ribbon (respectively disk) with $c$ transverse crossings and with minimum length amongst all ribbon (respectively disk) diagrams. Note we will slightly perturb any planar diagram to only have transverse crossings, eliminating any non-transverse crossings and replacing a crossing consisting of an interval of double points by a single transverse point or the empty set. See Figure \ref{fig:ribcross}, where the crossings in the right figures three and four get eliminated. Then $\gamma^*$ forms a graph $G$ with $c$ vertices of degree $4$ and hence $2c$ edges joining the vertices. 

Since the Euler characteristic of the plane is one, we see that $\gamma^*$ must have $c+1$ bounded complementary regions. Now by Proposition \ref{disk} if $\gamma^*$ is a ribbon projection, then each bounded  complementary region contains a unit disk, i.e the projection is a disk diagram.  Hence each of the bounded complementary regions has area at least at least $\pi$. By the isoperimetric inequality, each bounded complementary region has boundary length at least $2\pi$. Adding up the lengths of the boundaries of the bounded complementary regions gives at least $2(c+1)\pi$.

Let us denote by $\partial G$ the subgraph of $G$ consisting of edges which are in the boundary of a single bounded complementary region of $\gamma$. The edges of $\hat G=G \setminus \partial G$ will be referred to as {\it internal} edges of $G$ and clearly are in the boundary of two bounded complementary regions of $\gamma$.  Hence the sum of the lengths of the boundaries of the bounded complementary regions is $\| \partial G\|+2\|\hat G\|$. Combining, we get $2\ell=2\|\gamma\|\ge 2\|\gamma^*\|=2\|G\| =2 \| \partial G\|+2\|\hat G\| \ge 2(c+1)\pi + \|\partial G\|$. 

Next,  we know that the area enclosed by $\partial G$ is at least $(c+1)\pi$. By another application of the isoperimetric inequality, this implies $\|\partial G\| \ge 2\sqrt{(c+1)\pi}$. Putting this together with the inequality above, we get $2\ell \ge 2(c+1)\pi +2\sqrt{(c+1)\pi}$. This can be rewritten as $\frac{\ell}{\pi} \ge c+1 +\sqrt{(c+1)}$. Now this quadratic inequality is easily solved giving the final result $c \le \frac{\ell}{\pi} -\frac{1}{2}\bigg( \sqrt{(1+\frac{4\ell}{\pi})}-1\bigg)$.

\end{proof}

\begin{example}

For the standard projection of the trefoil knot, we found in \ref{trefoil} that the minimum ribbon length is $6+2\pi$. So putting twice this length into Theorem \ref{bound} 

$$c \le \frac{12+4\pi}{\pi}-\frac{1}{2}\bigg( \sqrt{1+\frac{4(12+4\pi)}{\pi}}-1\bigg)\approx 5.47$$

So to verify that the minimum ribbon length is that of the standard projection, it suffices to check diagrams of the trefoil with at most $5$ crossings. 


\end{example}

\begin{example}

For the standard projection of the Hopf link, we found in \ref{hopf} that the minimum ribbon length is $4+2\pi$. So putting twice this length into Theorem \ref{bound} gives 

$$c \le \frac{8+4\pi}{\pi}-\frac{1}{2}\bigg( \sqrt{1+\frac{4(8+4\pi)}{\pi}}-1\bigg)\approx 4.43$$

So to verify that the minimum ribbon length is that of the standard projection, it suffices to check diagrams of the Hopf links with at most $4$ crossings. The standard diagram has $2$ crossings. 

\end{example}

\section{Discussion}

There are three different classes of planar knot and link diagrams considered in this paper. The first class are ribbon knots and links which satisfy the separation bound and crossing condition. The second are diagrams which lie in disk space, i.e there is a unit disk in each complementary bounded region. Finally, a link which is a finite collection of ribbon knot diagrams gives a weaker condition than for ribbon links. So this gives a third class of diagrams. 

However, to obtain the crucial Proposition \ref{disk} it is clear that the weaker property of a link being a finite union of ribbon knots is not sufficient. Hence we cannot embed our problem into disk space using this weaker property. The advantage of using disk space is Theorem \ref{ribcs}. This result shows that if a disk space length minimiser is ribbon, then this diagram is a concatenation of arcs of circles of radius one and straight line segments. 

Note also that the proof of Proposition \ref{disk} uses an important property of ribbon knots and links, namely that the ribbon deformation retracts onto the core of the knot or link. This latter property is implied by the crossing condition. In fact, we chose the crossing condition with this property in mind.

  In work in progress, the first and third authors will introduce a new mathematical model of physical knots and links. The current paper is related to this project in that we have tried to produce a model of ribbon knots and links in the plane with similar properties to the $3$-dimensional model, which is harder to solve. However, two nice features of the $3$-dimensional model are that `ropelength' minimisers always exist and have good regularity properties for any knot or link type. Moreover for the Sussmann problem, see \cite{sus}, namely finding a shortest length  path between two nearby points with initial and final directions not differing too much, in our $3$-dimensional model the solution is a concatenation of arcs of circles of radius one and straight line segments. However we are not able to compute length minimisers for any knots and links, except for the unknot. 

Next, the bound obtained in Theorem \ref{bound} is most likely not optimal. It would be interesting to refine the bound, using more properties of ribbon and disk space length minimisers. The proof of Theorem \ref{bound} only uses the isoperimetric inequality. By bringing the bound closer to the minimum crossing number, e.g for alternating knots and links, this would give a much more efficient approach to computing the minimum ribbon and disk length over all planar diagrams.


{}
\end{document}